\newcommand{\QQ}{\mathbb{Q}}
\newcommand{\ZZ}{\mathbb{Z}}
\newtheorem{theorem}{Theorem}[section]
\newtheorem{lemma}[theorem]{Lemma}
\newtheorem{prop}[theorem]{Proposition}
\theoremstyle{definition}
\newtheorem{remark}{Remark}
\newtheorem{defn}{Definition}
\begin{document}
\title{Existence of $\eta$-quotients for Squarefree Levels}
\author{Michael Allen}
\date{\today}
\begin{abstract}
  In this paper, we investigate which modular form spaces can contain $\eta$-quotients, functions of the form $f(\tau) = \prod_{\delta \mid N} \eta(\delta\tau)^{r_\delta}$.  For $k\geq 2$ even and $N$ coprime to 6, we give necessary conditions for the space $M_k(\Gamma_1(N))$ to contain $\eta$-quotients.  We then show that these conditions are sufficient as well for a large family of squarefree levels $N$ coprime to 6.
\end{abstract}
\maketitle
\section{Introduction and Statement of Results}
Dedekind's $\eta$ function, defined by
\[\eta(\tau) = q^\frac{1}{24}\prod_{n=1}^\infty (1-q^n),\]
where $\tau$ is in the upper-half of the complex plane and $q := e^{2\pi i \tau}$, is one of the most famous and classical examples of a weight $\frac{1}{2}$ modular form.  The infinite product formula makes $\eta(\tau)$ particularly nice to work with for many computations.  As such, it is interesting to know when a modular form can be written as a linear combination of products and quotients of $\eta$-functions.  An $\eta$-quotient is a function of the form 
\begin{equation*}
 f(\tau) = \prod_{\delta \mid N} \eta(\delta\tau)^{r_\delta},
\end{equation*}
where each $r_\delta \in \ZZ$.  \\

Due to the nice form of $\eta(\tau)$, a question of interest is which modular objects can be expressed as an $\eta$-quotient or a linear combination of $\eta$-quotients.  In \cite{WebOf}, Ono shows that every holomorphic modular form for the entire modular group $\text{SL}_2(\ZZ)$ can be written as a rational function of $\eta(z)$, $\eta(2z)$, and $\eta(4z)$.  Martin and Ono \cite{MO} have classified all elliptic curves which are represented by $\eta$-quotients via the modularity theorem.  Lemke Oliver \cite{LO} has classified all $\theta$-functions which are $\eta$-quotients.  Additional work on spaces of modular forms spanned by $\eta$-quotients has been done by Arnold-Roksandich, James, and Keaton \cite{ARJK} and Rouse and Webb \cite{RW}, for example.  In \cite{AAHOS}, the author et al. investigate the question of which modular form spaces cannot contain $\eta$-quotients; these results are extended in this paper. \\

Throughout the paper we use the following notation.  Given a subgroup $\Gamma \subset \text{SL}_2(\ZZ)$, we use $S_k(\Gamma)$, $M_k(\Gamma)$, and $M_k^!(\Gamma)$ to denote the complex vector spaces of weight $k$ cusp forms, holomorphic modular forms, and weakly holomorphic forms, respectively, for the group $\Gamma$.  We use $M(\Gamma)$ to denote the graded ring of all holomorphic modular forms for $\Gamma$.  In the case where $\Gamma = \Gamma_0(N)$, we use $S_k(\Gamma_0(N), \chi), M_k(\Gamma_0(N), \chi)$, and $M^!_k(\Gamma_0(N), \chi)$ to denote the spaces of weight $k$ cusp forms, holomorphic modular forms, and weakly holomorphic forms, respectively, for the group $\Gamma_0(N)$ with Nebentypus $\chi$. \\

The following theorem, originally appearing in work of Newman \cite{Newman,Newman2} as well as Gordon and Hughes \cite{GH}, is useful for determining when a given $\eta$-quotient is a weakly holomorpic modular form.
\begin{theorem}[Theorem 1.64, \cite{WebOf}]\label{1.64}
Let $f$ be the $\eta$-quotient of level $N$ given by
\[f(\tau) = \prod_{\delta \mid N} \eta(\delta\tau)^{r_\delta}. \]
If $f$ satisfies both
\begin{align*}
 \sum_{\delta \mid N} \delta r_\delta &\equiv 0 \pmod{24} \\
 \sum_{\delta \mid N} \frac{N}{\delta}r_\delta &\equiv 0 \pmod{24},
\end{align*}
then for $k = \frac{1}{2}\sum_{\delta \mid N} r_\delta$ and $\chi(d) = \left(\frac{(-1)^ks}{d}\right)$, where $s = \prod_{\delta \mid N} \delta^{r_\delta}$, we have $f \in M_k^!(\Gamma_0(N),\chi)$.
\end{theorem}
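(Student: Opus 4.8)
The plan is to reduce the statement to two verifications: the modular transformation law $f(\gamma\tau) = \chi(d)(c\tau+d)^k f(\tau)$ for every $\gamma = \begin{pmatrix} a & b \\ c & d\end{pmatrix} \in \Gamma_0(N)$, and meromorphy of $f$ at the cusps. Since $\eta(\tau)$ is holomorphic and nonvanishing on the upper half-plane $\mathbb{H}$, the product $f$ is itself holomorphic and nonvanishing on $\mathbb{H}$, so nothing needs to be checked in the interior. The essential tool is the Dedekind transformation formula: for $\gamma \in \text{SL}_2(\ZZ)$ with $c > 0$ one has $\eta(\gamma\tau) = \epsilon(\gamma)(c\tau+d)^{1/2}\eta(\tau)$, where the multiplier $\epsilon(\gamma)$ is an explicit $24$th root of unity built from the Dedekind sum $s(d,c)$, the phase $\exp\!\big(\pi i (a+d)/(12c)\big)$, and a constant factor.

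First I would dispose of the generator $T = \begin{pmatrix} 1 & 1 \\ 0 & 1 \end{pmatrix}$, which is the case $c=0$ not covered by the formula above. From $\eta(\tau+1) = e^{\pi i/12}\eta(\tau)$ we get $\eta(\delta(\tau+1)) = e^{\pi i \delta/12}\eta(\delta\tau)$, so $f(\tau+1) = \exp\!\big(\tfrac{\pi i}{12}\sum_{\delta\mid N}\delta r_\delta\big) f(\tau)$; the first congruence hypothesis forces this factor to equal $1$, in agreement with $\chi(1)(0\cdot\tau+1)^k = 1$. For a general $\gamma \in \Gamma_0(N)$ with $c \neq 0$ (replacing $\gamma$ by $-\gamma$ if necessary to arrange $c>0$) I would use the key observation that, for each $\delta \mid N$,
\[
\delta\gamma\tau = \gamma_\delta(\delta\tau), \qquad \gamma_\delta = \begin{pmatrix} a & b\delta \\ c/\delta & d\end{pmatrix},
\]
where $\gamma_\delta \in \text{SL}_2(\ZZ)$ since $\delta \mid N \mid c$ and $\det\gamma_\delta = ad - bc = 1$. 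Applying the transformation formula to $\eta(\gamma_\delta(\delta\tau))$, the automorphy factor is $\big((c/\delta)(\delta\tau)+d\big)^{1/2} = (c\tau+d)^{1/2}$, so after multiplying over $\delta$,
\[
f(\gamma\tau) = \Big(\prod_{\delta \mid N}\epsilon(\gamma_\delta)^{r_\delta}\Big)(c\tau+d)^{\frac{1}{2}\sum_{\delta\mid N} r_\delta}f(\tau) = \Big(\prod_{\delta \mid N}\epsilon(\gamma_\delta)^{r_\delta}\Big)(c\tau+d)^{k}f(\tau).
\]
It then remains to prove that the residual multiplier $\prod_{\delta\mid N}\epsilon(\gamma_\delta)^{r_\delta}$ equals $\chi(d) = \left(\frac{(-1)^k s}{d}\right)$.

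The hard part is this last identity, where both congruence hypotheses and the arithmetic of Dedekind sums enter. Expanding each $\epsilon(\gamma_\delta)$ splits the residual multiplier into exponential phases of the form $\exp\!\big(\pi i \,\delta r_\delta(a+d)/(12c)\big)$, Dedekind-sum terms $\exp(-\pi i\, r_\delta\, s(d,c/\delta))$, and constant phases. Summing the first type produces $\exp\!\big(\tfrac{\pi i (a+d)}{12c}\sum_\delta \delta r_\delta\big)$, which the first hypothesis does not by itself trivialize; instead I would invoke Dedekind-sum reciprocity to convert each $s(d,c/\delta)$ into terms that recombine with these phases. The two hypotheses $\sum_\delta \delta r_\delta \equiv 0$ and $\sum_\delta (N/\delta) r_\delta \equiv 0 \pmod{24}$ are precisely what is needed to clear the residual exponentials and collapse the Dedekind-sum contributions to a single quadratic residue symbol in $d$. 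Tracking the sign $(-1)^k$ and the constant $s = \prod_\delta \delta^{r_\delta}$ through this reduction produces $\chi(d)$ as claimed; this bookkeeping, rather than any conceptual leap, is the main obstacle.

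Finally, to conclude $f \in M_k^!(\Gamma_0(N),\chi)$ I would verify meromorphy at each cusp. Because $\eta(\delta\tau) = q^{\delta/24}\prod_{n\geq 1}(1-q^{\delta n})$, the Fourier expansion of $f$ — and of each translate $f(\gamma\tau)$ used to move a given cusp to $\infty$ — begins at a finite power of the local uniformizer, the leading exponent being a finite rational number determined by the $r_\delta$ and the cusp (an expression involving $\sum_{\delta\mid N}\gcd(\delta,c)^2 r_\delta/\delta$ at the cusp of denominator $c$). Hence $f$ has at worst a pole and no essential singularity at every cusp, which is exactly the weakly holomorphic condition, completing the proof.
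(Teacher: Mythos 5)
First, a point of order: the paper never proves this statement. It is quoted as Theorem 1.64 of \cite{WebOf}, attributed to work of Newman and Gordon--Hughes, and used as a black box, so your attempt can only be measured against the standard proof in the literature --- whose overall skeleton you have, in fact, reproduced correctly.

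Within that skeleton, most of your steps are sound: holomorphy and nonvanishing of $f$ on the upper half-plane, the treatment of $T$ via the first congruence, the conjugation identity $\delta\gamma\tau = \gamma_\delta(\delta\tau)$ with $\gamma_\delta = \left(\begin{smallmatrix} a & b\delta \\ c/\delta & d \end{smallmatrix}\right) \in \mathrm{SL}_2(\ZZ)$, the resulting automorphy factor $(c\tau+d)^k$, and meromorphy at the cusps. The genuine gap is the step you wave off as bookkeeping: the identity $\prod_{\delta \mid N} \epsilon(\gamma_\delta)^{r_\delta} = \left(\frac{(-1)^k s}{d}\right)$ \emph{is} the theorem, and your justification for it is circular --- asserting that the two congruences ``are precisely what is needed to clear the residual exponentials and collapse the Dedekind-sum contributions to a single quadratic residue symbol in $d$'' restates the conclusion rather than deriving it. In particular, nothing in your sketch supplies a mechanism by which a Jacobi symbol could emerge: Dedekind reciprocity by itself only converts Dedekind sums into rational phases, and the passage from Dedekind sums to quadratic symbols requires a further arithmetic input, such as the Rademacher--Grosswald-type congruence $12c\,s(d,c) \equiv c + 1 - 2\left(\frac{d}{c}\right) \pmod{8}$ for odd $c$, or equivalently the closed-form evaluation of the $\eta$-multiplier in terms of $\left(\frac{d}{c}\right)$ for odd $c$. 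Once that is in hand, one still must apply quadratic reciprocity to each symbol $\left(\frac{d}{c/\delta}\right)$, use the multiplicativity relation $\left(\frac{c/\delta}{d}\right)\left(\frac{\delta}{d}\right) = \left(\frac{c}{d}\right)$ so that the product over $\delta$ telescopes to $\left(\frac{s}{d}\right)$ (the $\left(\frac{c}{d}\right)$-factors cancel because $\sum_\delta r_\delta = 2k$ is even), and track the reciprocity signs against the exponential phases to produce the factor $(-1)^k$; both congruence hypotheses are consumed precisely in this sign-tracking. That computation is where all of the number theory lives, and it is absent from your proposal: what you have is a correct frame around an empty center.
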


\begin{remark}\label{1.64conv}
In the case where $N$ is coprime to 6, the two congruences in Theorem \ref{1.64} are equivalent, as any number coprime to 6 is its own inverse modulo 24.  Additionally, the converse to Theorem \ref{1.64} holds when $N$ is coprime to 6.  Moreover, any weakly holomorphic $\eta$-quotient on $\Gamma_1(N)$ must also satisfy the hypotheses of Theorem \ref{1.64} (see Corollary 1.8 of \cite{AAHOS}).  That is, given $f(\tau) = \prod_{\delta \mid N} \eta(\delta\tau)^{r_\delta} \in M_k^!(\Gamma_1(N))$, we have
\[\sum_{\delta \mid N} \delta r_\delta \equiv 0 \pmod{24},\]
and hence $f(\tau) \in M_k^!(\Gamma_0(N),\chi)$ for some Nebentypus $\chi$.  Thus, every $\eta$-quotient which is modular for $\Gamma_1(N)$ is in fact modular for $\Gamma_0(N)$ whenever $\gcd(N,6) = 1$.  Because of this, in working with $\eta$-quotients on $\Gamma_1(N)$, we only need to consider the behavior at the cusps of $\Gamma_0(N)$.
\end{remark}

In work of Martin \cite{Martin}, the following is shown to be a complete set of representatives for the cusps of $\Gamma_0(N)$:
\[C_{\Gamma_0(N)} = \left\{\frac{a_c}{c} \in \QQ \hspace{2mm} \middle\vert \begin{array}{l}
c \mid N, 1 \leq a_c \leq N, \gcd(a_c,N) = 1
\\ \text{and } a_c \equiv a_c' \pmod{\gcd\left(c,\frac{N}{c}\right)} \iff a_c = a_c' \end{array}\right\}.\]
When $N$ is squarefree, as will be the case throughout this paper, this can be reduced to
\begin{equation}\label{cuspreps}
C_{\Gamma_0(N)} = \left\{\frac{1}{d} : d \mid N \right\}.
\end{equation}

In order to determine when $\eta$-quotients are fully holomorphic modular forms, we need to compute orders of vanishing at the cusps.  We do so using a theorem of Ligozat \cite{Ligozat}, also appearing in work of Biagioli \cite{Biagioli} and Martin \cite{Martin}.
\begin{theorem}[Theorem 1.65, \cite{WebOf}]\label{1.65}
N Let $c,d$, and $N$ be positive integers with $d \mid N$ and $\gcd(c,d) = 1$.  Then if $f(\tau)$ is an $\eta$-quotient satisfying the conditions given in Theorem \ref{1.64} for $N$, then the order of vanishing for $f(\tau)$ at the cusp $c/d$ relative to $\Gamma_0(N)$ is
\[v_{\frac{c}{d}} := v_{\Gamma_0(N)}\left(f,\frac{c}{d}\right) = \frac{N}{24} \sum_{\delta \mid N} \frac{\gcd(d,\delta)^2r_\delta}{\gcd(d,\frac{N}{d})d\delta}.\]
\end{theorem}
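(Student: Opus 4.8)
The plan is to reduce the statement to a computation for a single factor $\eta(\delta\tau)$ and then sum, since the order of a product at a cusp is the sum of the orders of the factors weighted by the exponents $r_\delta$. The one essential analytic input is the transformation law of $\eta$ under the full modular group, namely that $\eta(M\tau) = \epsilon(M)(c_M\tau + d_M)^{1/2}\eta(\tau)$ for $M = \begin{pmatrix} a_M & b_M \\ c_M & d_M\end{pmatrix}\in\mathrm{SL}_2(\ZZ)$, where $\epsilon(M)$ is a $24$th root of unity. I would begin by fixing a scaling matrix $\gamma = \begin{pmatrix} c & b \\ d & e\end{pmatrix}\in\mathrm{SL}_2(\ZZ)$, which exists because $\gcd(c,d)=1$, so that $\gamma$ carries $\infty$ to the cusp $c/d$, and then study the leading behavior of $f(\gamma\tau)$ as $\tau\to i\infty$.

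The heart of the argument is to control $\eta(\delta\gamma\tau)$ for each $\delta \mid N$. I would write $\begin{pmatrix}\delta & 0\\0&1\end{pmatrix}\gamma = \begin{pmatrix}\delta c & \delta b\\ d & e\end{pmatrix}$ and factor this as $M_\delta\begin{pmatrix} g & \ast \\ 0 & \delta/g\end{pmatrix}$, where $g := \gcd(\delta c, d)$ and $M_\delta\in\mathrm{SL}_2(\ZZ)$ has first column $(\delta c/g,\, d/g)^{T}$; such a factorization exists precisely because $\delta c/g$ and $d/g$ are coprime. The key simplification is that $\gcd(c,d)=1$ forces $g = \gcd(\delta c, d) = \gcd(\delta, d)$. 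Applying the transformation law to $M_\delta$ then yields $\eta(\delta\gamma\tau) = (\text{root of unity})\cdot(\text{nonvanishing automorphy factor})\cdot\eta\!\left(\tfrac{g\tau + \ast}{\delta/g}\right)$, and since the leading term of $\eta(\tau')$ is $e^{2\pi i\tau'/24}$, the factor $\eta(\delta\gamma\tau)$ contributes leading exponent $\tfrac{g^2}{24\delta} = \tfrac{\gcd(d,\delta)^2}{24\delta}$ in the variable $q = e^{2\pi i\tau}$. Neither the root of unity, the half-integral automorphy factors, nor the weight-$k$ factor $(d\tau+e)^{-k}$ affects this exponent.

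Collecting all factors, $f(\gamma\tau)$ has leading exponent $\tfrac{1}{24}\sum_{\delta\mid N}\tfrac{\gcd(d,\delta)^2 r_\delta}{\delta}$ in $q$. The final step is the renormalization: the order of vanishing relative to $\Gamma_0(N)$ is measured in the local uniformizer $q_h = e^{2\pi i\tau/h}$, where $h$ is the width of the cusp $c/d$, so $v_{c/d}$ is exactly $h$ times the $q$-exponent just found. Since $d \mid N$, the width is $h = \tfrac{N}{d\,\gcd(d, N/d)}$, and multiplying through reproduces the stated formula $v_{c/d} = \tfrac{N}{24}\sum_{\delta\mid N}\tfrac{\gcd(d,\delta)^2 r_\delta}{\gcd(d, N/d)\,d\,\delta}$.

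I expect the main obstacle to be the matrix factorization together with the careful verification that the multiplier system and automorphy factors contribute nothing to the order: one must confirm that $M_\delta$ can genuinely be chosen in $\mathrm{SL}_2(\ZZ)$, that the upper-triangular part has determinant $\delta$, and that the hypotheses inherited from Theorem \ref{1.64} make $f$ properly modular on $\Gamma_0(N)$ so that measuring the order in $q_h$ with width $h$ is the correct normalization. Establishing the width formula $h = N/(d\gcd(d,N/d))$ for divisor denominators is a secondary but necessary point. Once the leading exponent of each $\eta(\delta\gamma\tau)$ is pinned down, the remaining manipulations are purely arithmetic.
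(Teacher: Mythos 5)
This statement is not proved in the paper at all: it is Ligozat's theorem, quoted verbatim as Theorem 1.65 of Ono's \emph{Web of Modularity} (with references to Ligozat, Biagioli, and Martin), and the paper simply uses it as a black box to compute orders of vanishing. So there is no internal proof to compare yours against; what you have written is a reconstruction of the standard proof from the literature, and it is sound in outline. Your key steps are all correct: the scaling matrix $\gamma$ with first column $(c,d)^T$ exists by $\gcd(c,d)=1$; the factorization $\begin{pmatrix}\delta & 0\\0&1\end{pmatrix}\gamma = M_\delta\begin{pmatrix} g & \ast \\ 0 & \delta/g\end{pmatrix}$ with $M_\delta\in\mathrm{SL}_2(\ZZ)$ works because $\delta c/g$ and $d/g$ are coprime, and the bottom-right entry $\delta/g$ is an integer precisely because $g=\gcd(\delta c,d)=\gcd(\delta,d)$ divides $\delta$ (this identity is where $\gcd(c,d)=1$ is used, as you note); the leading exponent $g^2/(24\delta)$ in $q$ then follows from $\eta(\tau')\sim e^{2\pi i \tau'/24}$; and the width of the cusp $c/d$ is $h = N/\gcd(d^2,N) = N/(d\gcd(d,N/d))$, which converts the $q$-exponent into the stated formula. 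The only point you should make fully explicit in a written-up version is the one you flag yourself: the multiplier $\epsilon(M_\delta)$, the factors $(c_{M_\delta}A_\delta\tau + d_{M_\delta})^{1/2}$, and $(d\tau+e)^{-k}$ have constant modulus or polynomial growth in $\tau$, hence cannot alter the exponential rate of decay as $\tau\to i\infty$, which is what the order of vanishing measures; this justifies reading off $v_{c/d}$ purely from the $\eta(A_\delta\tau)$ factors. With that observation pinned down, your argument is a complete and correct proof of the quoted theorem.
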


The next theorem, first appearing in work of Rouse and Webb \cite{RW}, says that for $M(\Gamma_0(N))$ to be generated by $\eta$-quotients, $N$ must be sufficiently composite relative to its size.
\begin{theorem}[\cite{RW}]\label{RWB}
Suppose that $f(\tau) = \prod_{\delta \mid N} \eta(\delta\tau)^{r_\delta} \in M_k(\Gamma_0(N),
\chi)$.  Then we have
\[\sum_{\delta \mid N} |r_\delta| \leq 2k \prod_{p \mid N} \left(\frac{p+1}{p-1}\right)^{\min\left\{2,\text{ord}_p(N)\right\}}.\]
\end{theorem}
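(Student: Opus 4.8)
The plan is to exploit the single most useful feature of $\eta$: it has no zeros or poles on the upper half plane, so every zero and pole of $f$ occurs at a cusp of $\Gamma_0(N)$. First I would invoke the valence formula. Since $f$ is a nonzero weight $k$ form with no interior zeros, the sum of its orders of vanishing over all cusps equals $\frac{k}{12}\mu$, where $\mu = [\text{SL}_2(\ZZ):\Gamma_0(N)] = N\prod_{p\mid N}\left(1+\frac{1}{p}\right)$. By Theorem \ref{1.65} the order $v_{c/d}$ depends only on the denominator $d$; writing $v_d$ for this common value and recalling that there are $\phi(\gcd(d,N/d))$ cusps of denominator $d$, this reads $\sum_{d\mid N}\phi(\gcd(d,N/d))\,v_d = \frac{k}{12}\mu$. (This identity can also be checked directly from the formula in Theorem \ref{1.65}, which sidesteps any worry about the normalization of the order at a cusp.) Crucially, holomorphicity of $f$ forces $v_d\geq 0$ for every $d$.

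Next I would regard Theorem \ref{1.65} as defining an invertible linear map $M$ sending the exponent vector $(r_\delta)_{\delta\mid N}$ to the order vector $(v_d)_{d\mid N}$, with entries $M_{d\delta} = \frac{N}{24}\frac{\gcd(d,\delta)^2}{\gcd(d,N/d)\,d\,\delta}$. The structural point that makes everything tractable is multiplicativity: because $\gcd$, the divisor lattice, and the weights all factor over primes, $M$ decomposes as a tensor product $\bigotimes_{p\mid N} M_p$ of local matrices indexed by the powers of each prime, and hence so does $M^{-1}$. Inverting to write $r_\delta = \sum_d (M^{-1})_{\delta d}\,v_d$, and then applying the triangle inequality together with $v_d\geq 0$, yields $\sum_\delta |r_\delta| \leq \sum_d \left(\sum_\delta |(M^{-1})_{\delta d}|\right) v_d$.

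Let $s_d = \sum_\delta |(M^{-1})_{\delta d}|$ denote the $d$-th column sum of $|M^{-1}|$, and set $C = \max_{d\mid N} s_d/\phi(\gcd(d,N/d))$. Combining the previous display with the weighted valence identity gives $\sum_\delta |r_\delta| \leq \sum_d s_d v_d \leq C\sum_d \phi(\gcd(d,N/d))\,v_d = C\cdot\frac{k}{12}\mu$. Since $M^{-1}$, the cusp multiplicities, and $\mu$ all factor over primes, $C\cdot\frac{k}{12}\mu$ is multiplicative and the estimate reduces to a single computation at each prime power dividing $N$. For a prime level $N=p$ one finds $s_d/\phi=24/(p-1)$ at both cusps, and multiplying by $\frac{k}{12}(p+1)$ gives exactly $2k\,\frac{p+1}{p-1}$; for $N=p^2$ the extremal cusp is the middle one $d=p$, producing $2k\left(\frac{p+1}{p-1}\right)^2$. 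The local factor at $p$ is thus $\left(\frac{p+1}{p-1}\right)^{\min\{2,\text{ord}_p(N)\}}$, and multiplying over $p\mid N$ produces the bound of Theorem \ref{RWB}.

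The main obstacle is precisely this local prime-power analysis: one must invert each $M_p$, compute the column sums of its absolute value, divide by the cusp multiplicities $\phi(\gcd(p^a,p^{\,\text{ord}_p(N)-a}))$, and identify which cusp maximizes the ratio. The genuinely nontrivial phenomenon is that this maximum — and therefore the exponent of $\frac{p+1}{p-1}$ — saturates at $2$ once $\text{ord}_p(N)\geq 2$; verifying that the extremal ratio does not keep growing for higher prime powers is the crux of the argument. For the squarefree levels that are the focus of this paper, every $\text{ord}_p(N)=1$, so only the clean $2\times 2$ local inversion is needed and the bound collapses to the transparent $2k\prod_{p\mid N}\frac{p+1}{p-1}$.
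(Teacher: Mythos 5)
First, a point of order: the paper contains no proof of Theorem \ref{RWB} to compare against. It is quoted verbatim from Rouse and Webb \cite{RW} and used purely as a black box (in Lemma \ref{etaproduct}, and hence in Theorem \ref{sqrfrnonexist}). So your proposal can only be judged on its own merits and against \cite{RW}. On that score, your outline is the right one and follows the same route as the original: every zero of an $\eta$-quotient lies at a cusp; the weighted identity $\sum_{d\mid N}\phi(\gcd(d,N/d))\,v_d=\frac{k}{12}[\mathrm{SL}_2(\mathbb{Z}):\Gamma_0(N)]$ holds (and, as you sensibly note, can be verified as a linear identity in the $r_\delta$ directly from Theorem \ref{1.65}, sidestepping normalization conventions); holomorphicity gives $v_d\geq 0$; and the exponent-to-order map has a tensor-product structure over the primes dividing $N$ that reduces the column-sum estimates to prime-power computations. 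I have checked your two local computations and they are correct: for $\mathrm{ord}_p(N)=1$ the ratio is $24/(p-1)$ at both cusps, and for $\mathrm{ord}_p(N)=2$ the maximum occurs at the middle cusp $d=p$ and assembles to $\left(\frac{p+1}{p-1}\right)^2$. One bookkeeping point should be made explicit in a write-up: the prefactor $\frac{N}{24}$ in Theorem \ref{1.65} is not itself multiplicative across the tensor factorization --- the $24$ occurs once globally, not once per prime --- so one must write $M=\frac{1}{24}\bigotimes_p B_p$ and carry a single factor of $24$ through $M^{-1}$, else the assembled bound would wrongly acquire a factor of $24$ for every prime divisor of $N$. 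Your final constants ($2k=\frac{24k}{12}$) show you implicitly handled this correctly, but the argument as phrased (``$M^{-1}$\ldots factor[s] over primes'') glosses over it.

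The genuine gap is the case $\mathrm{ord}_p(N)\geq 3$. Theorem \ref{RWB} as stated covers all levels, with local exponent $\min\{2,\mathrm{ord}_p(N)\}$, and you explicitly defer --- indeed, call ``the crux'' --- the verification that the extremal column-sum-to-multiplicity ratio saturates at $\left(\frac{p+1}{p-1}\right)^2$ for all higher prime powers instead of continuing to grow. Without that computation (inverting the $(e+1)\times(e+1)$ local matrices, whose inverses are in fact tridiagonal since they are diagonally rescaled matrices of the form $\left(p^{-|i-j|}\right)_{i,j}$, and then maximizing the ratio over the $e+1$ cusp classes), what you have proved is the theorem only for levels with $\mathrm{ord}_p(N)\leq 2$ at every prime. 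That restricted statement happens to cover everything this paper needs, since the bound is only ever applied to squarefree $N$, where your $2\times 2$ computation yields $2k\prod_{p\mid N}\frac{p+1}{p-1}$ in full rigor; but as a proof of Theorem \ref{RWB} in its stated generality, the argument is incomplete until the general prime-power analysis is carried out.
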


This bound implies the existence of an upper bound on the number of $\eta$-quotients in any space $M_k(\Gamma_0(N),\chi)$ which depends only on the number of prime divisors of $N$.  In particular, as $\text{dim}(M_k(\Gamma_0(N),\chi))$ tends towards infinity as $N$ does, there can only exist a finite number of levels with a given number of prime divisors such that the ring $M(\Gamma_0(N))$ is generated by $\eta$-quotients. \\

In \cite{AAHOS}, the author et al. investigate other factors in addition to Theorem \ref{RWB} which inhibit the existence of $\eta$-quotients in certain modular form spaces.  The first result from \cite{AAHOS} gives a complete categorization of which space $M_k(p)$ contain $\eta$-quotients when $k\geq 2$ is even and $p > 5$ is prime.
\begin{theorem}[Theorem 1.9 \cite{AAHOS}]\label{primeexist}
Let $p$ be prime, set $h_p = \frac{1}{2} \gcd(p-1,24)$, and let $k$ be an even integer.  Then there exists $f(\tau) = \eta^{r_1}(\tau)\eta^{r_p}(p\tau) \in M_k(\Gamma_1(p))$ if and only if both of the following conditions hold.
\begin{enumerate}[label=(\roman*)]
 \item $h_p \mid k$
 \item It is not the case that $k=2$, $p \equiv 5 \pmod{24}$, and $p \ne 5$.
\end{enumerate}
\end{theorem}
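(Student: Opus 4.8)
The plan is to translate the existence question into a lattice-point problem. Since $N = p$ is prime, the only divisors are $1$ and $p$, so every candidate $\eta$-quotient has the form $f(\tau) = \eta^{r_1}(\tau)\eta^{r_p}(p\tau)$ with $r_1 + r_p = 2k$; writing $r := r_p$ and $r_1 = 2k - r$ reduces everything to a single integer parameter $r \in \ZZ$. By Remark \ref{1.64conv}, such an $f$ lies in $M_k^!(\Gamma_1(p))$ only if it lies in $M_k^!(\Gamma_0(p),\chi)$, which by Theorem \ref{1.64} (and its converse for $\gcd(p,6)=1$) is governed by the single congruence $r_1 + p r_p = 2k + (p-1)r \equiv 0 \pmod{24}$. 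Using the cusp representatives $\tfrac11$ and $\tfrac1p$ from \eqref{cuspreps} together with Theorem \ref{1.65}, I would compute the two orders of vanishing
\[ v_{1/p} = \frac{2k + (p-1)r}{24}, \qquad v_{1/1} = \frac{2pk - (p-1)r}{24}, \]
so that $f \in M_k(\Gamma_1(p))$ exactly when both are nonnegative, i.e. when $-\frac{2k}{p-1} \le r \le \frac{2pk}{p-1}$.

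Thus existence becomes: is there an integer $r$ lying in the closed interval $I = [-\frac{2k}{p-1}, \frac{2pk}{p-1}]$ and satisfying $(p-1)r \equiv -2k \pmod{24}$? The congruence is solvable in $\ZZ$ precisely when $\gcd(p-1,24) \mid 2k$, i.e. when $h_p \mid k$, and in that case its solutions form an arithmetic progression $r \equiv r_0 \pmod{12/h_p}$. This immediately yields the necessity of (i): if $h_p \nmid k$ no admissible $r$ exists at all, not even for a weakly holomorphic quotient. For sufficiency I would first observe that a closed interval of length at least the common difference $12/h_p$ always contains a term of the progression, so a solution exists whenever $\operatorname{length}(I) = \frac{2k(p+1)}{p-1} \ge \frac{12}{h_p}$, equivalently $k\, h_p (p+1) \ge 6(p-1)$.

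The remaining work is a short, explicit analysis of the finitely many $(h_p, k)$ for which this inequality can fail. Running through the possible values $h_p \in \{1,2,3,4,6,12\}$ (determined by $p \bmod 24$) together with the constraint that $k$ is an even multiple of $h_p$, I would show the inequality holds in all cases except $(h_p,k) = (1,2),\,(1,4),\,(2,2)$ with $p$ large. In each of these the failure forces $p > 2k+1$, and for such $p$ the endpoints of $I$ satisfy $-1 < -\frac{2k}{p-1} < 0$ and $2k < \frac{2pk}{p-1} < 2k+1$, so the available integers are exactly $\{0,1,\dots,2k\}$ (equivalently, holomorphy reduces to $r_1, r_p \ge 0$). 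The question is then purely whether the surviving residue $r_0$ meets $\{0,\dots,2k\}$. Computing $r_0 \bmod (12/h_p)$ directly from $(p-1)r \equiv -2k \pmod{24}$ in each residue class of $p \bmod 24$, I expect to find $r_0 \equiv 2$ (for $k=2$, $h_p=1$) and $r_0 \equiv 4$ (for $k=4$, $h_p=1$), both lying in the allowed range, whereas for $k=2$ and $h_p = 2$ (i.e. $p \equiv 5 \pmod{24}$) one gets $r_0 \equiv 5 \pmod 6$, which misses $\{0,1,2,3,4\}$ entirely. This isolates exactly the non-existence described by (ii), with $p=5$ escaping because there $\operatorname{length}(I) = 6 = 12/h_p$, so the general interval argument still applies.

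The main obstacle is precisely this boundary bookkeeping. Everything away from a small explicit list of $(h_p,k)$ follows from the clean principle that an interval longer than the gap must contain a progression term, but the statement's exceptional family lives exactly on the boundary where that principle is inconclusive. The delicate point is to confirm that the single residue class surviving the congruence lands outside $\{0,\dots,2k\}$ \emph{precisely} when $k=2$ and $p\equiv 5 \pmod{24}$ with $p\neq 5$, and nowhere else — in particular verifying that the superficially similar cases $h_p = 1$, $k\in\{2,4\}$ do produce valid exponents.
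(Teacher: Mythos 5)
The paper never proves Theorem \ref{primeexist}: it is imported verbatim from \cite{AAHOS} (Theorem 1.9 there) and used as a black box in the proof of Proposition \ref{sqrfrexist}, so there is no in-paper argument to compare yours against; your proposal therefore has to stand on its own, and it does. The two orders of vanishing are computed correctly from Theorem \ref{1.65} (taking $d=p$ gives $24v_{1/p} = r_1 + pr_p = 2k+(p-1)r$, and $d=1$ gives $24v_{1/1} = pr_1 + r_p = 2pk-(p-1)r$), the solvability criterion $\gcd(p-1,24)\mid 2k \iff h_p \mid k$ together with Remark \ref{1.64conv} gives necessity of (i), and the progression-in-an-interval principle correctly settles every case with $kh_p(p+1) \geq 6(p-1)$. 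Your list of boundary cases is exactly right: since $k$ is an even multiple of $h_p$, that inequality can only fail for $(h_p,k) \in \{(1,2),(1,4),(2,2)\}$, and the residue computations check out — for $h_p=1$ (equivalently $p \equiv 11$ or $23 \pmod{24}$) one finds $r \equiv 2 \pmod{12}$ when $k=2$ and $r \equiv 4 \pmod{12}$ when $k=4$, realized by $\eta^2(\tau)\eta^2(p\tau)$ and $\eta^4(\tau)\eta^4(p\tau)$, while for $p \equiv 5 \pmod{24}$, $k=2$ the congruence $4r \equiv -4 \pmod{24}$ forces $r \equiv 5 \pmod{6}$, which misses $\{0,\dots,4\}$, and $p=5$ escapes because the interval then has length exactly $6 = 12/h_5$. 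One point to tighten in a full write-up: your claim that failure of the inequality ``forces $p > 2k+1$'' is immediate only for $(h_p,k)=(2,2)$; for $(1,2)$ and $(1,4)$ it follows not from the inequality itself but from the fact that $h_p=1$ already forces $p \equiv 11$ or $23 \pmod{24}$, hence $p \geq 11 > 2k+1$. Finally, note that your non-existence argument in the $(2,2)$ case (nonnegative exponents summing to $4$ cannot satisfy the mod-$24$ congruence) is precisely the mechanism the paper uses in Section 3 to prove Theorem \ref{sqrfrnonexist}, so your proof is a one-prime instance of the paper's own later technique, combined with the interval-counting argument that the paper relegates to \cite{AAHOS}.
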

The next theorem gives a similar classification for spaces $M_k(\Gamma_1(pq))$, where again $k \geq 2$ is even and $p,q > 5$ are distinct primes.
\begin{theorem}[Theorem 1.10 \cite{AAHOS}]\label{semiexist}
Let $p,q$ be distinct primes and let $N = pq$.  Let $k\geq 2$ be even, and define $h_N := \frac{1}{2}\gcd\left\{p-1,q-1,24\right\}.$  There exists an $\eta$-quotient $f(\tau) \in M_k(\Gamma_1(N))$ if and only if
\begin{enumerate}[label=(\roman*)]
 \item $h_N \mid k$
 \item It is not the case that $k=2$, $(p,q) \equiv (1,5), (5,1),$ or $(5,5) \pmod{24}$, and neither $p$ nor $q$ is equal to 5.
\end{enumerate}
\end{theorem}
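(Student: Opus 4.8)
The plan is to reduce the existence question to a system of linear and congruence conditions on the exponent vector $r=(r_1,r_p,r_q,r_{pq})\in\ZZ^4$ and then analyze its solvability. By Remark \ref{1.64conv}, since $\gcd(N,6)=1$, an $\eta$-quotient $f$ lies in $M_k(\Gamma_1(N))$ if and only if $f\in M_k(\Gamma_0(N),\chi)$ for some $\chi$; concretely we need $\sum_{\delta\mid N}r_\delta=2k$ (weight), $\sum_{\delta\mid N}\delta r_\delta\equiv 0\pmod{24}$ (the congruence of Theorem \ref{1.64}), and $v_{1/d}\geq 0$ at each of the four cusps $1/d$, $d\in\{1,p,q,pq\}$ from \eqref{cuspreps}. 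First I would write out the four orders of vanishing using Theorem \ref{1.65}; since $N$ is squarefree, $\gcd(d,N/d)=1$, and the quantities $24 v_{1/d}$ assemble into a single matrix identity $24v = Mr$, where $M=\left(\begin{smallmatrix}p&1\\1&p\end{smallmatrix}\right)\otimes\left(\begin{smallmatrix}q&1\\1&q\end{smallmatrix}\right)$ is a Kronecker product (the coefficient of $r_\delta$ in $24v_{1/d}$ factors through the $p$- and $q$-parts of $\gcd(d,\delta)$).

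The key structural observation is that this Kronecker form diagonalizes over the characters $(\pm1)\otimes(\pm1)$. Writing $S_{\varepsilon_1\varepsilon_2}$ for the corresponding signed sums of the $r_\delta$ and $U_{\varepsilon_1\varepsilon_2}$ for those of the $v_{1/d}$, inversion gives the four clean relations
\begin{align*}
 r_1+r_p+r_q+r_{pq} &= \tfrac{24}{(p+1)(q+1)}U_{++}=2k,\\
 S_{+-}=\tfrac{24}{(p+1)(q-1)}U_{+-},\quad S_{-+} &= \tfrac{24}{(p-1)(q+1)}U_{-+},\quad S_{--}=\tfrac{24}{(p-1)(q-1)}U_{--}.
\end{align*}
Integrality of $r$ is then equivalent to all four $S_{\varepsilon_1\varepsilon_2}$ being integers of a common parity, which translates into divisibility conditions on the $U_{\varepsilon_1\varepsilon_2}$ modulo the various $(p\pm1)(q\pm1)$. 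I would also record that $\sum_d v_{1/d}=U_{++}=\tfrac{(p+1)(q+1)k}{12}$, and that the single congruence $\sum_\delta\delta r_\delta\equiv0\pmod{24}$ already forces all four $24v_{1/d}\equiv0\pmod{24}$, since multiplication by $p$ or $q$ permutes the cusp-sums modulo $24$ (here $p^2\equiv q^2\equiv1\pmod{24}$).

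For necessity I would combine the identity $U_{++}=\tfrac{(p+1)(q+1)k}{12}$ with the integrality and parity constraints above to force $h_N=\tfrac12\gcd(p-1,q-1,24)\mid k$. The cleanest route is to note that $h_N=\gcd(h_p,h_q)$ with $h_p=\tfrac12\gcd(p-1,24)$, and to run a CRT and case analysis on the residues of $p,q$ modulo $24$: the denominators $(p\pm1)$ contribute exactly the factors of $\gcd(p\mp1,24)$, and tracking these through the four relations pins down the possible $k$. The exceptional case (ii) at $k=2$ is where the binding obstruction is nonnegativity together with parity rather than divisibility: here $U_{++}=\tfrac{(p+1)(q+1)}{6}$ is so small that the only order-vectors $v\geq0$ meeting the required congruences force a nonintegral or wrong-parity $S_{\varepsilon_1\varepsilon_2}$, so no admissible $r$ exists. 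The caveat that neither prime equals $5$ reflects the extra room that appears precisely when $p=5$ or $q=5$, mirroring Theorem \ref{primeexist}.

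For sufficiency, assuming $h_N\mid k$ and that (ii) does not occur, I would construct explicit $\eta$-quotients. When $h_p\mid k$ or $h_q\mid k$, Theorem \ref{primeexist} already produces an $\eta$-quotient on $\Gamma_1(p)$ or $\Gamma_1(q)$, which lies in $M_k(\Gamma_1(pq))$ because $\Gamma_1(pq)\subset\Gamma_1(p),\Gamma_1(q)$. The substantive cases are those with $\gcd(h_p,h_q)\mid k$ but neither $h_p\mid k$ nor $h_q\mid k$; for these I would exhibit genuine level-$pq$ generators of weight $2h_N$ by choosing small nonnegative order-vectors $v$ solving the congruence system, and then realize a general admissible $k$ by taking products of these generators together with a holomorphic block of weight $2h_N$ (orders of vanishing and weights add under multiplication, so holomorphy is preserved), reducing the claim to finitely many small weights settled by direct construction. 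I expect the main obstacle to be precisely the bookkeeping in the two middle steps: isolating exactly the residue triples in (ii) and the $p,q=5$ caveat on the necessity side, and producing generators that uniformly cover the ``$\gcd$ but not individual'' weights on the sufficiency side, both of which hinge on delicate $\gcd$-with-$24$ and parity computations.
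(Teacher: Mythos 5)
First, a point of order: the paper you were asked to match never proves Theorem \ref{semiexist} at all --- it is imported verbatim from \cite{AAHOS} and used as a black box (its only role here is as input to Lemma \ref{reduce} and Proposition \ref{sqrfrexist}). So your proposal can only be judged on its own merits and against the closest machinery the paper does contain, namely Section 3. On that score your structural setup is sound: the cusp-order matrix for $N=pq$ is indeed the Kronecker product you describe (this is the $\ell=2$ instance of Lemma \ref{sudokulemma}), the sign-character diagonalization is correct, $S_{++}=2k$ and $U_{++}=\tfrac{(p+1)(q+1)k}{12}$ are right, and your observation that the single congruence of Theorem \ref{1.64} forces every $24v_{1/d}\equiv 0\pmod{24}$ (via $p^2\equiv q^2\equiv 1\pmod{24}$) is correct and useful.

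However, at both genuinely hard points the proposal substitutes an assertion for an argument, and at the first point the asserted mechanism is wrong. For the $k=2$ exceptional case you claim every admissible $v\geq 0$ ``forces a nonintegral or wrong-parity $S$.'' That is false: for $(p,q)\equiv(1,5)\pmod{24}$ the vector $(r_1,r_p,r_q,r_{pq})=(5,0,-1,0)$ is integral, has weight $2$, and satisfies $\sum_\delta \delta r_\delta\equiv 0\pmod{24}$, so integrality, parity, and the congruence present no obstruction. What fails is holomorphy: the weight and congruence conditions force some $r_\delta<0$, and a negative exponent produces a pole at a cusp (here $24v_{1/q}=p(5-q)<0$) because the diagonal entry $N$ of the cusp matrix dominates the off-diagonal entries once $p,q\geq 29$. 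This ``negative exponent implies pole'' step is exactly the content of Lemma \ref{etaproduct} via Theorem \ref{RWB}, and it is also precisely where $p,q\neq 5$ enters (for $q=5$ the ratio $\tfrac{q+1}{q-1}$ is large enough that, e.g., $\eta(5\tau)^5/\eta(\tau)\in M_2$ survives); your sketch contains no version of it. The sufficiency half has a parallel gap: the reduction ``if $h_p\mid k$ then Theorem \ref{primeexist} produces a level-$p$ form'' ignores that Theorem \ref{primeexist} has its own $k=2$ exception. For $p\equiv 5$, $q\equiv 7\pmod{24}$, $p\neq 5$, $k=2$, one has $h_N=1\mid k$ and $pq\notin S$, so Theorem \ref{semiexist} promises a form, yet $h_p=2\mid k$ yields nothing (that is the excluded case of Theorem \ref{primeexist}) and $h_q=3\nmid k$; a genuine level-$pq$ construction is unavoidable, and all such constructions are deferred in your write-up to unspecified ``direct construction'' (note also that products of weight-$2h_N$ generators only reach weights in $2h_N\ZZ$, missing admissible $k\equiv h_N\pmod{2h_N}$ when $h_N$ is even). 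The framework you set up could in principle be completed --- your eigenvalue bounds essentially re-derive Theorem \ref{RWB} in this four-dimensional case --- but the pole mechanism and the explicit constructions are the theorem, and they are missing.
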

These results generalize to squarefree levels as follows.  First, define
\begin{equation*}
S:= \left\{ \prod_{i=1}^\ell p_i^{e_i} \middle\vert \begin{array}{l}
\text{For all } i, p_i > 5 \text{ prime, } p_i \equiv 1 \text{ or } 5 \pmod{24} \\
\text{and at least one } p_i \equiv 5 \pmod{24} \end{array} \right\}.
\end{equation*}
Note that with this definition, condition (ii) in Theorem \ref{primeexist} (resp. Theorem \ref{semiexist}) can be rephrased as ``It is not the case that $k=2$ and $p \in S$ (resp. $pq \in S$)''.  Our first result generalizes the forwards direction of Theorem \ref{semiexist} to arbitrary levels coprime to 6.
\begin{theorem}\label{exist}
Let $N=p_1^{e_1}p_2^{e_2}\cdots p_{\ell}^{e_\ell}$ be an integer coprime to 6, let $k \geq 2$ be even, and define $h_N = \frac{1}{2}\gcd\left\{p_1-1,p_2-1,\hdots,p_\ell-1,24\right\}$.  There exists an $\eta$-quotient in $M_k(\Gamma_1(N))$ if
\begin{enumerate}[label=(\roman*)]
    \item $h_N \mid k$
    \item It is not the case that $k=2$ and $N \in S$.
\end{enumerate}
\end{theorem}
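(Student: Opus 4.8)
The plan is to prove sufficiency by exhibiting an explicit \emph{holomorphic} $\eta$-quotient, which I would reduce to a concrete integer feasibility problem. First I would reduce to the case where $N$ is squarefree: since $h_N$ and the condition $N\in S$ depend only on the prime divisors of $N$, and since $\mathrm{rad}(N)\mid N$ gives $\Gamma_1(N)\subseteq\Gamma_1(\mathrm{rad}(N))$ and hence $M_k(\Gamma_1(\mathrm{rad}(N)))\subseteq M_k(\Gamma_1(N))$, it is enough to construct a quotient of the squarefree level $\mathrm{rad}(N)=p_1\cdots p_\ell$. For such $N$, Remark~\ref{1.64conv} and \eqref{cuspreps} reduce the task to finding integers $(r_\delta)_{\delta\mid N}$ with $\frac12\sum_\delta r_\delta=k$, with $\sum_\delta \delta r_\delta\equiv 0\pmod{24}$, and with every cusp order $v_{1/d}\ge 0$ for $d\mid N$ as computed by Theorem~\ref{1.65}; the congruence then places $f$ in some $M_k^!(\Gamma_0(N),\chi)\subseteq M_k^!(\Gamma_1(N))$, and nonnegative cusp orders upgrade this to $M_k(\Gamma_1(N))$.

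The key simplification I would exploit is that Theorem~\ref{1.65} writes $v_{1/d}$ as a \emph{nonnegative} linear combination of the $r_\delta$, so if every $r_\delta\ge 0$ then all cusp orders are automatically nonnegative and holomorphy comes for free. This collapses the whole problem to a purely additive one: find a multiset of $2k$ divisors of $N$ (equivalently nonnegative exponents $r_\delta$ with $\sum_\delta r_\delta=2k$) whose sum $\sum_\delta \delta r_\delta$ is $\equiv 0\pmod{24}$. Since $N$ is coprime to $6$, every divisor $\delta\mid N$ reduces into the subgroup $H:=\langle p_1,\dots,p_\ell\rangle\le(\Zmod{24})^\times$, and conversely every element of $H$ is the residue of some divisor, so the data is exactly a choice of $2k$ elements of $H$ summing to $0$ modulo $24$.

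To analyze this I would write $\sum_\delta \delta r_\delta=2k+\sum_\delta(\delta-1)r_\delta$, where the correction term ranges, over nonnegative $r_\delta$, through the subgroup $D:=\langle x-1:x\in H\rangle\le\Zmod{24}$. Setting $g:=\gcd(24,p_1-1,\dots,p_\ell-1)=2h_N$, one checks that $g\mid p_i-1$ for all $i$, so every $x\in H$ satisfies $x\equiv 1\pmod g$, whence $D=g\,\Zmod{24}=2h_N\,\Zmod{24}$. A sum $\equiv 0\pmod{24}$ is therefore attainable only if $2k\in D$, i.e.\ $2h_N\mid 2k$, which is precisely hypothesis (i) that $h_N\mid k$. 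For sufficiency I would then show that, granting $h_N\mid k$, the residue $-2k\in D$ can be realized using at most $2k$ of the shifts $\delta-1$ while padding the remaining slots with $\delta=1$; to dispose of large $k$ uniformly I would multiply by $\Delta=\eta(\tau)^{24}\in M_{12}$ (which preserves nonnegativity of exponents and adds $24\equiv 0$ to the divisor sum), reducing to finitely many small residues of $k$ modulo $12$.

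The main obstacle is the feasibility of this additive problem at the \emph{minimal} weights, where there are few slots available and the fine structure of $H$ intervenes. The decisive boundary case is $H=\{1,5\}$, which occurs exactly when every $p_i\equiv 1$ or $5\pmod{24}$ with at least one $\equiv 5$, i.e.\ when $N\in S$: here the attainable sums of $2k$ elements are exactly $\{\,2k+4c:0\le c\le 2k\,\}$, and $2k+4c\equiv 0\pmod{24}$ is solvable for every even $k\ge 4$ (for instance weight $4$ via $c=4$ and weight $6$ via $c=3$, whose products already cover all even $k\ge 4$) but is \emph{unsolvable} for $k=2$, which is precisely the exclusion (ii). In every remaining case $H$ contains an element outside $\{1,5\}$, furnishing a shift $\delta-1$ whose $2$-adic or $3$-adic valuation is large enough to reach $-2k$ within the available slots; carrying this out across the possible subgroups $H\le(\Zmod{24})^\times$ and the finitely many small residues of $k$ is the technical heart of the argument.
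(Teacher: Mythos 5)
Your reduction to the squarefree case and your observation that nonnegative exponents make holomorphy automatic (via Theorem \ref{1.65}) are both sound. But the restriction to $\eta$-\emph{products} creates a genuine gap that your case analysis cannot close: the case $5 \mid N$. The set $S$ requires every prime divisor to exceed $5$, so if $5 \mid N$ and every prime divisor of $N$ is $\equiv 1$ or $5 \pmod{24}$, then $N \notin S$ while $h_N = 2$; hence conditions (i) and (ii) hold at $k=2$ and the theorem asserts the existence of an $\eta$-quotient in $M_2(\Gamma_1(N))$. In this situation, however, $H = \{1,5\}$, and by your own computation the attainable values of $\sum_\delta \delta r_\delta \pmod{24}$ with nonnegative exponents summing to $2k=4$ are $\{4,8,12,16,20\}$, none of which is $0$: no $\eta$-\emph{product} of weight $2$ exists at such levels. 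So your closing claim that ``in every remaining case $H$ contains an element outside $\{1,5\}$'' is false exactly here, and your method cannot produce the required form. The witness in this case is a genuine quotient with a negative exponent, for instance $\eta(5\tau)^5/\eta(\tau)$, which satisfies $\sum_\delta \delta r_\delta = -1+25 = 24 \equiv 0 \pmod{24}$ and, by Theorem \ref{1.65}, has cusp orders $0$ and $1$ at the cusps $1$ and $\tfrac{1}{5}$ of $\Gamma_0(5)$; it therefore lies in $M_2(\Gamma_1(5)) \subset M_2(\Gamma_1(N))$ for every $N$ divisible by $5$.

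For comparison, the paper sidesteps this entirely: it reduces squarefree $N$ to a product of two of its prime divisors via Lemma \ref{reduce} (choosing $p=5$ in precisely the problematic case, so that $h_{pq}=h_N$ and $pq \notin S$) and then invokes Theorem \ref{semiexist}, whose condition (ii) explicitly exempts $p=5$ or $q=5$ because such genuine quotients exist at level $5$. Your approach is otherwise attractive, since it is self-contained and replaces the appeal to Theorems \ref{primeexist} and \ref{semiexist} by an explicit additive feasibility problem over $(\Zmod{24})^\times$; the rest of your subgroup-by-subgroup analysis does go through (in every case other than $H=\{1,5\}$, a suitable combination of shifts reaching $-2k$ within $2k$ slots exists whenever $h_N \mid k$). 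To repair the argument you must either graft in the level-$5$ quotient above as a special case, or extend the feasibility analysis to allow negative exponents, at which point holomorphy is no longer free and you would need to control cusp orders directly.
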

Theorems \ref{primeexist} and \ref{semiexist} suggest that $M_2(\Gamma_1(N))$ cannot contain $\eta$-quotients if the prime divisors of $N$ fall into certain residue classes modulo 24.  On the other hand, Theorem \ref{RWB} tells us that the more composite a number is, the easier it is to have $\eta$-quotients of that level.  This complicates proving the converse of Theorem \ref{exist}, as the bounding term in Theorem \ref{RWB} can be made arbitrarily large as $N$ ranges through all squarefree integers.  However, we are able to give the following partial converse to Theorem \ref{exist}.
\begin{theorem}\label{sqrfrnonexist}
 Let $N\in S$ be squarefree and coprime to 6, and let $p_1$ denote the smallest prime dividing $N$.  If
 \begin{equation}\label{RWHyp}
  4\prod_{p \mid N} \frac{p+1}{p-1} < p_1 +1,
 \end{equation}
 then there is no $\eta$-quotient in $M_2(\Gamma_1(N))$.
\end{theorem}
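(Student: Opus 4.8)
The plan is to argue by contradiction: assume that some $f(\tau) = \prod_{\delta\mid N}\eta(\delta\tau)^{r_\delta}$ lies in $M_2(\Gamma_1(N))$, and extract enough arithmetic constraints on the exponent vector $(r_\delta)_{\delta\mid N}$ to rule it out. First, by Remark \ref{1.64conv}, such an $f$ satisfies the hypotheses of Theorem \ref{1.64}, so $f \in M_2^!(\Gamma_0(N),\chi)$ for some $\chi$, the weight condition gives $\sum_{\delta\mid N} r_\delta = 2k = 4$, and the congruence $\sum_{\delta\mid N}\delta r_\delta \equiv 0 \pmod{24}$ holds. Next, since $N$ is squarefree every $\mathrm{ord}_p(N)=1$, so Theorem \ref{RWB} reads $\sum_{\delta\mid N}|r_\delta| \le 4\prod_{p\mid N}\tfrac{p+1}{p-1}$; combining this with hypothesis \eqref{RWHyp} and the integrality of the left-hand side yields the crucial smallness bound $\sum_{\delta\mid N}|r_\delta| \le p_1$.

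The heart of the argument is to promote this smallness bound, via holomorphy at the cusps, to the assertion that every $r_\delta$ is nonnegative. Since $f \in M_2$, Theorem \ref{1.65} gives $v_{1/d} \ge 0$ at each cusp $1/d$ from the list \eqref{cuspreps}; as $N$ is squarefree we have $\gcd(d, N/d) = 1$, so I would rewrite the order as
\[\frac{24 d}{N}\, v_{1/d} = d\, r_d + \sum_{\delta \ne d} \frac{\gcd(d,\delta)^2}{\delta}\, r_\delta \ge 0.\]
The key elementary observation is that for squarefree $N$ and any $\delta \ne d$ one has $\frac{\gcd(d,\delta)^2}{\delta} \le \frac{d}{p_1}$: writing $g=\gcd(d,\delta)$ we get $\frac{g^2}{\delta}=\frac{g}{\delta/g}$, and passing from $\delta=d$ to any other divisor either deletes a prime factor of $d$ (so $g \le d/p_1$) or adjoins a prime coprime to $d$ (so $\delta/g \ge p_1$); either way the quotient drops by a factor of at least $p_1$. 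Thus the coefficient of $r_d$ dominates every other coefficient by a factor of at least $p_1$. If some $r_d$ were negative, bounding the off-diagonal sum by $\frac{d}{p_1}\sum_{\delta\ne d}|r_\delta| \le \frac{d}{p_1}(p_1 - |r_d|)$ and using $|r_d| \ge 1$ would force the right-hand side to be at most $d\bigl(1-|r_d|(1+\tfrac1{p_1})\bigr) < 0$, contradicting $v_{1/d} \ge 0$. Hence $r_\delta \ge 0$ for every $\delta \mid N$.

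Finally, I would feed this nonnegativity back into the congruence. Because $N \in S$, every prime dividing $N$ is congruent to $1$ or $5 \pmod{24}$, and since $\{1,5\}$ is closed under multiplication modulo $24$, every divisor $\delta$ is likewise $\equiv 1$ or $5 \pmod{24}$. Writing $b$ for the sum of those $r_\delta$ with $\delta \equiv 5 \pmod{24}$, the congruence $\sum_\delta \delta r_\delta \equiv 0 \pmod{24}$ together with $\sum_\delta r_\delta = 4$ collapses to $4 + 4b \equiv 0 \pmod{24}$, i.e.\ $b \equiv 5 \pmod 6$. But nonnegativity of all the $r_\delta$ forces $0 \le b \le \sum_\delta r_\delta = 4$, and no integer in $[0,4]$ is congruent to $5$ modulo $6$; this contradiction completes the proof.

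I expect the main obstacle to be the nonnegativity step: isolating and justifying the coefficient-domination inequality $\frac{\gcd(d,\delta)^2}{\delta} \le \frac{d}{p_1}$ uniformly over all $\delta \ne d$, and verifying that the slack it provides—a factor of $1+\tfrac1{p_1}$—is exactly what converts the bound $\sum_\delta|r_\delta| \le p_1$ into strict negativity of $v_{1/d}$ whenever $r_d<0$. Everything downstream, namely the mod-$24$ reduction and the final parity obstruction, is then a short computation.
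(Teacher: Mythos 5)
Your proof is correct and follows essentially the same route as the paper: the paper's Lemma \ref{etaproduct} likewise combines the bound of Theorem \ref{RWB} (with hypothesis (\ref{RWHyp})) and the cusp-order formula of Theorem \ref{1.65} to force every $r_\delta \geq 0$, and then derives the same contradiction from the mod-24 congruence together with $\sum_{\delta \mid N} r_\delta = 4$. The only cosmetic difference is that you verify the coefficient-domination inequality $\gcd(d,\delta)^2/\delta \leq d/p_1$ directly, whereas the paper routes it through Lemma \ref{sudokulemma} (every divisor of $N$ appears exactly once in each row of the cusp-order matrix, so each off-diagonal coefficient is a divisor of $N$ other than $N$ itself); both give the same factor-of-$p_1$ domination of the diagonal term.
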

The product in the left-hand side of (\ref{RWHyp}) is made larger the smaller the prime divisors of $N$ are.  Thus, the smallest number $N$ for which this extra hypothesis fails is
\[N = \!\!\!\!\!\!\!\!\!\!\!\!\prod_{\substack{\text{prime }p \leq M \\ p \equiv 1 \text{ or } 5 \text{ (mod }24)}} \!\!\!\!\!\!\!\!\!\!\!\! p,\]
where
\[M := \inf\left\{x \quad \middle\vert \quad 4 \times \!\!\!\!\!\!\!\!\!\!\!\!\prod_{\substack{\text{prime }p \leq x \\ p \equiv 1 \text{ or } 5 \text{ (mod }24)}} \!\!\!\!\!\!\!\!\! \left(\frac{p+1}{p-1}\right) \geq 30 \right\}.\]
The product
\[\prod_{\substack{\text{prime }p \leq x \\ p \equiv 1 \text{ or } 5 \text{ (mod }24)}} \!\!\!\!\!\!\!\!\! \left(\frac{p+1}{p-1}\right)\]
grows numerically like $\log\log\log(x)$, and so $M$ will be extremely large.  As $N$ is then the product of all primes congruent to 1 or 5 (mod 24) up to $M$, $N$ must be astronomically large before this hypothesis can fail.  We do not claim that the converse is false when $N$ becomes large enough for (\ref{RWHyp}) to fail, only that different techniques from ours would be needed to address this case. \\

The remainder of this paper is devoted to proving these two theorems.  Theorem \ref{exist} is handled in Section 2, and Section 3 deals with the proof of Theorem \ref{sqrfrnonexist}.

\section{Modular form spaces which contain $\eta$-quotients}
In order for there to exist $\eta$-quotients which are fully holomorphic modular forms for $\Gamma_1(N)$ and weight $k$, there certainly must at least exist weakly holomorphic $\eta$-quotients.  In \cite{AAHOS}, the authors et al. obtained the following result for existence of weakly holomorphic $\eta$-quotients.
\begin{theorem}[Theorem 4.1, \cite{AAHOS}]\label{weakexist}
Let $N=p_1^{e_1}p_2^{e_2}\cdots p_{\ell}^{e_\ell}$ with each $p_i \geq 5$, and define $h_N = \frac{1}{2}\gcd(p_1-1,\cdots,p_\ell-1,24)$.  There exists an $\eta$-quotient $f(\tau)= \prod_{\delta \mid N} \eta^{r_\delta} (\delta\tau) \in M_K^!(\Gamma_1(N))$ if and only if $h_N \mid k$.
\end{theorem}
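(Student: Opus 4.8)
The plan is to strip away the analytic content and reduce the statement to an elementary solvability question modulo $24$, which I then settle by a one-line subgroup computation in $\Zmod{24}$. Since each $p_i \geq 5$ we have $\gcd(N,6)=1$, so Theorem \ref{1.64} together with its converse recorded in Remark \ref{1.64conv} applies. The first step is to observe that an $\eta$-quotient $f(\tau)=\prod_{\delta\mid N}\eta(\delta\tau)^{r_\delta}$ lies in $M_k^!(\Gamma_1(N))$ if and only if it satisfies the congruence hypotheses of Theorem \ref{1.64}; because weakly holomorphic forms impose no condition at the cusps, Theorem \ref{1.65} plays no role here, and the two congruences of Theorem \ref{1.64} collapse to a single one for levels coprime to $6$. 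Thus existence of such an $f$ is equivalent to the integer solvability of
\[\sum_{\delta\mid N} r_\delta = 2k, \qquad \sum_{\delta\mid N}\delta r_\delta \equiv 0 \pmod{24}.\]

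Next I would reformulate this as a membership question in a finite abelian group. Using the weight relation, rewrite
\[\sum_{\delta\mid N}\delta r_\delta \;=\; 2k + \sum_{\substack{\delta\mid N\\ \delta>1}}(\delta-1)r_\delta,\]
and note that the coefficient $\delta-1$ vanishes when $\delta=1$. Hence I may let the $r_\delta$ with $\delta>1$ range freely over $\ZZ$ and then set $r_1 = 2k - \sum_{\delta>1} r_\delta$ to maintain the weight; as these free variables vary, the value of $\sum_{\delta>1}(\delta-1)r_\delta$ modulo $24$ sweeps out exactly the subgroup $G := \langle\, \delta-1 \bmod 24 : \delta\mid N\,\rangle \leq \Zmod{24}$. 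Consequently the second congruence is solvable with the weight fixed if and only if $-2k\in G$, i.e.\ (as $G$ is a group) if and only if $2k\in G$.

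It then remains to identify $G$. Every subgroup of $\Zmod{24}$ is cyclic, so $G=\langle g\rangle$, where $g$ is the greatest common divisor of $24$ together with all integers $\delta-1$ for $\delta\mid N$, $\delta>1$. I claim $g=\gcd(p_1-1,\dots,p_\ell-1,24)=2h_N$. One inclusion is immediate: each prime $p_i$ is a divisor of $N$, so $g$ divides every $p_i-1$ as well as $24$, forcing $g\mid 2h_N$. For the reverse, put $m=\gcd(p_1-1,\dots,p_\ell-1,24)$; then $p_i\equiv 1\pmod m$ for each $i$, so any divisor $\delta=\prod_i p_i^{a_i}\equiv 1\pmod m$, giving $m\mid \delta-1$ for every $\delta$ and therefore $m\mid g$. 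Hence $g=m=2h_N$. Since $g\mid 24$, the subgroup $\langle g\rangle$ consists precisely of the residues divisible by $2h_N$, so $2k\in G$ if and only if $2h_N\mid 2k$, that is, $h_N\mid k$. Combining this with the reduction proves both implications, and in the affirmative case any explicit integer solution $(r_\delta)$ to the displayed system exhibits the desired $\eta$-quotient.

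I do not expect a serious technical obstacle; the content of the argument is the reformulation rather than any hard estimate. The one point demanding genuine care is the opening reduction: I must be sure that weak holomorphy genuinely contributes no constraint beyond the weight and the single congruence, and that passing between $\Gamma_1(N)$ and $\Gamma_0(N)$ is harmless—precisely the facts supplied by Remark \ref{1.64conv}. Once that is secured, the divisor computation $g=2h_N$ is the crux, and it follows cleanly from the observation that $p_i\equiv 1\pmod m$ propagates multiplicatively to all divisors of $N$.
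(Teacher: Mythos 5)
Your proof is correct, but there is an important caveat about the comparison you were asked for: this paper contains no proof of Theorem \ref{weakexist} at all. The result is imported verbatim from \cite{AAHOS} (Theorem 4.1 there), and the only thing the present paper adds is a remark that the cited proof never uses squarefreeness, so the statement holds for general $N$ with each $p_i \geq 5$. Judged on its own merits, your argument holds up. The opening reduction --- that for $\gcd(N,6)=1$ an $\eta$-quotient lies in $M_k^!(\Gamma_1(N))$ if and only if $\sum_\delta r_\delta = 2k$ and $\sum_\delta \delta r_\delta \equiv 0 \pmod{24}$ --- is exactly what Theorem \ref{1.64} and Remark \ref{1.64conv} supply: the forward implication is the quoted Corollary 1.8 of \cite{AAHOS}, and the backward implication is Theorem \ref{1.64} together with the containment $M_k^!(\Gamma_0(N),\chi) \subset M_k^!(\Gamma_1(N))$; weak holomorphy genuinely imposes no cusp conditions because $\eta$-quotients are holomorphic and nonvanishing on the upper half-plane. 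The algebra is also sound: eliminating $r_1$ is free since its coefficient $\delta - 1$ vanishes at $\delta = 1$; the values of $\sum_{\delta>1}(\delta-1)r_\delta$ modulo $24$ fill out the subgroup generated by $\gcd\bigl(24,\{\delta-1 : \delta \mid N\}\bigr)$; and your identification of that gcd with $2h_N$ is correct precisely because $p_i \equiv 1 \pmod{m}$ propagates multiplicatively to every divisor of $N$ (prime powers included, so your argument covers the non-squarefree case the paper's remark is concerned with). Your subgroup-membership phrasing ($-2k \in G$ iff $2k \in G$ iff $2h_N \mid 2k$ iff $h_N \mid k$) is an algebraic repackaging of the Bézout-style construction one would expect in the original source: writing $-2k = \sum_i a_i(p_i-1) + 24b$ and setting $r_{p_i} = a_i$, $r_1 = 2k - \sum_i a_i$, all other $r_\delta = 0$, exhibits an explicit quotient. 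So the route is almost certainly the natural one, just organized around a group-theoretic lemma rather than an explicit construction.
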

\begin{remark}
In \cite{AAHOS}, the above theorem is only stated to hold for $N$ squarefree.  However, the proof makes no use of this assumption, and so still holds for this generalized version.
\end{remark}
Before proving Theorem \ref{exist} in full generality, we will prove it for the specialized case where $N$ is squarefree.  
We begin with a lemma which allows us to reduce this case to Theorem \ref{semiexist}.
\begin{lemma}\label{reduce}
Let $N = p_1p_2\cdots p_{\ell}$ with $\ell \geq 2$.  Then there exist distinct primes $p$ and $q$ dividing $N$ such that $h_N = h_{pq}$.  Moreover, if $N \notin S$, then $p$ and $q$ can be chosen such that $pq \notin S$.
\end{lemma}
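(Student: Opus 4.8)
The plan is to translate everything into the arithmetic of $\gcd(p_i-1,24)$. Writing $a_i := \gcd(p_i-1,24)$ for each prime divisor $p_i$ of $N$, one checks that $\gcd\{p_1-1,\dots,p_\ell-1,24\} = \gcd\{a_1,\dots,a_\ell\} =: g$, so that $h_N = g/2$, and likewise $h_{pq} = \gcd(a_p,a_q)/2$ for any two prime divisors. Thus the first assertion reduces to the purely combinatorial claim that among the $a_i$ there are two, with distinct indices, whose gcd equals $g$. Since each $a_i$ is an even divisor of $24 = 2^3\cdot 3$, I would record each $a_i$ by the pair $(v_2(a_i),v_3(a_i))$ and use that $v_2(g)=\min_i v_2(a_i)$ and $v_3(g)=\min_i v_3(a_i)$.

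For the first assertion, I would choose an index $j$ attaining $\min_i v_2(a_i)$ and an index $k$ attaining $\min_i v_3(a_i)$. Then $\gcd(a_j,a_k)$ has $2$-adic valuation $v_2(a_j)=v_2(g)$ and $3$-adic valuation $v_3(a_k)=v_3(g)$, so $\gcd(a_j,a_k)=g$ and $p=p_j$, $q=p_k$ work. If it happens that $j=k$, then $a_j=g$ already; since $\ell\geq 2$, I can pick any other index $m$, and because $g\mid a_m$ we get $\gcd(a_j,a_m)=g$. This gives $h_{pq}=h_N$ in all cases.

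For the second assertion I would first tabulate $a_i=\gcd(p_i-1,24)$ against the residue of $p_i$ modulo $24$: the residues $1$ and $5$, the only ones relevant to $S$, correspond to $a_i=24$ and $a_i=4$ respectively, while the residue $13$ corresponds to $a_i=12$. The crux observation is that if $pq\in S$ then $a_p,a_q\in\{4,24\}$ with at least one equal to $4$, and every such pair satisfies $\gcd(a_p,a_q)=4$. Consequently, whenever $g\neq 4$ the pair produced above automatically has $\gcd(a_p,a_q)=g\neq 4$ and so $pq\notin S$, and we are done. The only remaining case is $g=4$, i.e. $v_2(g)=2$ and $v_3(g)=0$; here every $a_i\in\{4,12,24\}$ and at least one $a_i=4$, forcing a prime $\equiv 5\pmod{24}$. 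Since $N\notin S$ but $N$ has a prime factor $\equiv 5$, some prime factor must fail to be $\equiv 1,5\pmod{24}$; the only such residue compatible with $a_i\in\{4,12,24\}$ is $13$, giving a prime $p_0$ with $a_{p_0}=12$. Taking this $p_0$ together with a prime $p_1$ having $a_{p_1}=4$, we get $\gcd(12,4)=4=g$ and $p_0p_1\notin S$ since $p_0\not\equiv 1,5$, completing the proof.

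The main obstacle is the $g=4$ case of the second assertion, where the gcd constraint and the requirement $pq\notin S$ genuinely interact; the observation that every $S$-producing pair has gcd exactly $4$ is what isolates this single case and makes the bookkeeping manageable. The only mild care needed elsewhere is the degenerate possibility $j=k$ in the first assertion, handled using $\ell\geq 2$.
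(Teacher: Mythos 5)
Your reduction to $a_i=\gcd(p_i-1,24)$ and the minimum-valuation argument for the first assertion are correct (and genuinely cleaner than the paper's six-way case analysis on the value of $h_N$), and your observation that every pair $pq\in S$ has $\gcd(a_p,a_q)=4$ correctly isolates $g=4$, i.e.\ $h_N=2$, as the only case where the gcd constraint and the condition $pq\notin S$ interact. But your treatment of that one case has two genuine errors. First, when $g=4$ the possible values of $a_i$ are $\{4,8,12,24\}$, not $\{4,12,24\}$: you have overlooked primes $\equiv 17\pmod{24}$, which give $a_i=\gcd(16,24)=8$. In particular it is not forced that some $a_i=4$. For $N=13\cdot 17$ we have $g=\gcd(12,8)=4$ and no prime $\equiv 5\pmod{24}$ at all, so your argument, which calls for a prime with $a_{p_1}=4$, produces no pair. (The paper's proof covers this: if $5\notin R_N$ then $17\in R_N$, and also $13\in R_N$ since otherwise $h_N=4$, and the pair $p\equiv 17$, $q\equiv 13$ works.)

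Second, the deduction ``$N\notin S$ but $N$ has a prime factor $\equiv 5\pmod{24}$, hence some prime factor fails to be $\equiv 1,5\pmod{24}$'' is false, because $S$ also requires every prime factor to exceed $5$, and the prime $5$ itself is $\equiv 5\pmod{24}$. For $N=5\cdot 29$ every prime factor is $\equiv 5\pmod{24}$, yet $N\notin S$; there is no prime $\equiv 13\pmod{24}$ to pair as you prescribe. The paper handles this subcase separately: take $p=5$ and any other prime factor $q$, so that $5q\notin S$ automatically and $h_{5q}=2=h_N$. Both gaps are repairable inside your framework: when $g=4$ and $N\notin S$, either some prime is $\equiv 13$ or $17\pmod{24}$ (choose it as the witness of $\min_i v_2(a_i)$, resp.\ $\min_i v_3(a_i)$, and complete the pair with any witness of the other minimum), or else every prime is $\equiv 1,5\pmod{24}$, which together with $N\notin S$ forces $5\mid N$, and one takes $p=5$ with $q$ arbitrary. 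As written, though, the $g=4$ case of the ``moreover'' statement is not proved.
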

\begin{proof}
We begin by setting some notation.  For an integer $a$, let $\tilde{a}$ denote the least residue of $a$ mod 24.  Let $R_N$ be the set of least residues of $N$'s prime divisors mod 24.  That is,
\[R_N = \left\{ \tilde{p_1},\tilde{p_2},\hdots,\tilde{p_\ell}\right\}.\]
By definition, $p_1 = \tilde{p_1} + 24m$ for some integer $m$.  Observe that if $d \mid p_1-1$ and $d \mid 24$, then $d \mid (p_1-1 - 24m) = \tilde{p_1}-1$.  Similarly, any divisor of both $\tilde{p_1}-1$ and $24$ must also divide $p_1-1$, and so in particular,
\[h_N = \frac{1}{2} \gcd\left\{p_1-1,p_2-1,\cdots,p_\ell-1,24\right\} = \frac{1}{2}\gcd\left\{\tilde{p_1}-1,\tilde{p_2}-1, \cdots, \tilde{p_\ell}-1, 24\right\}.\]
That is, $h_N$ depends only on $R_N$, the residues of the prime divisors of $N$ modulo 24.  We proceed to the proof by cases on the value of $h_N$.  As $h_N$ is one half an even divisor of 24, the possible values of $h_N$ are 12, 6, 4, 3, 2, and 1.  Note that if $N \in S$, then $R_N = \left\{5\right\}$ or $\left\{1,5\right\}$, and so $h_N = 2$.  Thus, in all cases excluding $h_N = 2$, we may assume that $N \notin S$. \\

If $h_N = 12$, we equivalently have that every prime divisor of $N$ is 1 (mod 24), and so $R_N = \left\{1\right\}$.  Letting $p$ and $q$ be any divisors of $N$, we again have $R_{pq} = \left\{1\right\}$, and so $h_{pq} = h_N$. Additionally, as $p \equiv q \equiv 1$ (mod 24), $pq \notin S$.  \\

Next, if $h_N = 6$, then every prime divisor is congruent to 1 (mod 12), and so $R_N \subseteq \left\{1,13\right\}$.  However, $R_N$ must contain 13, as otherwise we would have $h_N = 12$. Let $p \mid N$ be such that $p \equiv 13$ (mod 24), and let $q$ be any other prime divisor of $N$.  Then $R_{pq}$ is either $\left\{1,13\right\}$ or $\left\{13\right\}$ depending on the residue class of $q$, but in either case $h_{pq} = 6 = h_N$.  As $p \equiv 13$ (mod 24), we also have that $pq \notin S$, and so gives the desired divisor.  The case $h_N = 4$ is very similar.  For this to hold, we must have $R_N = \left\{1,17\right\}$ or $\left\{17\right\}$, and in either case so long as we choose $p \equiv 17$ (mod 24) then any choice of $q$ will suffice.  \\

If $h_N = 3$, then every prime divisor of $N$ is 1 (mod 6), and so $R_N \subseteq \left\{1,7,13,19\right\}$.  But $R_N$ must contain at least one of $7$ or $19$, as otherwise we would have $h_N = 6$ or 12.  We may thus pick some prime divisor $p$ of $N$ which belongs to one of these two residue classes.  Let $q$ be any other prime divisor of $N$.  The possible residue sets for $pq$ are 
\[\left\{7\right\}, \left\{19\right\}, \left\{1,7\right\}, \left\{7,13\right\}, \left\{7,19\right\}, \left\{1,19\right\}, \text{ and } \left\{13,19\right\}.\]  
One can quickly check that for each of these possibilities, $h_{pq} = 3$.  Additionally, as $p$ is not congruent to $1$ or $5$ (mod 24), it follows that $pq \notin S$.  \\

For $h_N = 2$, we have a similar situation.  The residue set must satisfy $R_N \subseteq \left\{1,5,13,17\right\}$, and either $5$ or $17$ must belong to $R_N$, otherwise $h_N$ will be larger than 2.  We first consider the case where $R_N = \left\{1,5\right\}$ or $\left\{5\right\}$.  First, if $N \in S$, then we may choose $p \equiv 5$ (mod 24) and take $q$ to be any other prime divisor, and we have $h_{pq} = 2 = h_N$.  On the other hand, if $N \notin S$ then we must have $5 \mid N$.  Letting $q$ be any divisor of $N$ distinct from 5, $5q \notin S$ and $h_{5q} = 2 = h_N$.  Next suppose $5 \notin R_N$.  This implies $13 \in R_N$, as if it were not we would have $h_N = 4$.  Hence, we may choose divisors $p \equiv 17$ and $q \equiv 13$ (mod 24), which will suffice.  Finally, if $5 \in R_N$ and $R_N \ne \left\{5\right\}$ or $\left\{1,5\right\}$, then taking $p \equiv 5$ and $q \equiv 13$ or $17$ will work.  \\

Lastly, we consider the case where $h_N = 1$.  As $2 \nmid h_N$, there must exist a divisor $p$ which is congruent to $3$ (mod 4).  Additionally, as $3 \nmid h_N$, there must exist some prime divisor $q$ which is congruent to either $3$ or $5$ (mod 6).  Our choice of $p$ guarantees that $2 \nmid h_{pq}$, and our choice of $q$ is such that $3 \nmid h_{pq}$.  Therefore, as $h_{pq}$ is a divisor of 24 which is divisible by neither 2 nor 3, it follows that $h_{pq} = 1$.  Additionally, since $p \equiv 3$ (mod 4), $pq \notin S$.\\

\end{proof}

We now have enough to prove the specialization of Theorem \ref{exist} to the case where $N$ is squarefree.
\begin{prop}\label{sqrfrexist}
Let $N = p_1p_2\cdots p_\ell$ be squarefree, and let $k \geq 2$ be even.  Define $h_N = \frac{1}{2}\gcd\left\{p_1-1,p_2-1,\cdots,p_\ell-1,24\right\}$.  Then there exists an $\eta$-quotient in $M_k(\Gamma_1(N))$ if
\begin{enumerate}[label=(\roman*)]
 \item $h_N \mid k$
 \item It is not the case that $k=2$ and $N \in S$.
\end{enumerate}
\end{prop}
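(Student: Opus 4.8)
The plan is to reduce the claim to the prime-level and $pq$-level cases already settled in Theorems \ref{primeexist} and \ref{semiexist}, using Lemma \ref{reduce} to match the weight invariant. The key observation is that whenever $M \mid N$ one has $\Gamma_1(N) \subseteq \Gamma_1(M)$, and hence $M_k(\Gamma_1(M)) \subseteq M_k(\Gamma_1(N))$; moreover an $\eta$-quotient $\prod_{\delta \mid M} \eta(\delta\tau)^{s_\delta}$ of level $M$ is automatically an $\eta$-quotient of level $N$, obtained by setting $r_\delta = s_\delta$ for $\delta \mid M$ and $r_\delta = 0$ otherwise. Thus it is enough to produce an $\eta$-quotient in $M_k(\Gamma_1(M))$ for a suitable divisor $M = p$ or $M = pq$ of $N$ and then regard it at level $N$.

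When $\ell = 1$, so that $N = p$ is prime, we have $h_N = h_p$, and since $p \in S$ precisely when $p > 5$ and $p \equiv 5 \pmod{24}$, conditions (i) and (ii) of the Proposition coincide with conditions (i) and (ii) of Theorem \ref{primeexist}. That theorem then furnishes the required $\eta$-quotient directly, so this degenerate case (which lies outside the scope of Lemma \ref{reduce}) is handled separately.

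When $\ell \geq 2$, Lemma \ref{reduce} supplies distinct primes $p, q \mid N$ with $h_{pq} = h_N$, chosen moreover so that $pq \notin S$ whenever $N \notin S$. Condition (i) then gives $h_{pq} = h_N \mid k$, which is the first hypothesis of Theorem \ref{semiexist}. For the second hypothesis one must rule out the case $k = 2$ and $pq \in S$. If $k \geq 4$ this is immediate since $k \neq 2$, and $h_{pq} = h_N$ is all we need from the lemma. If $k = 2$, then condition (ii) of the Proposition forces $N \notin S$, whereupon the refined conclusion of Lemma \ref{reduce} yields $pq \notin S$; in either case the exceptional case of Theorem \ref{semiexist} is avoided. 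Theorem \ref{semiexist} thus produces an $\eta$-quotient $g \in M_k(\Gamma_1(pq))$, which by the reduction above is an $\eta$-quotient lying in $M_k(\Gamma_1(N))$.

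The step requiring the most care — indeed the crux of the reduction, despite being formally routine — is the passage from level $pq$ to level $N$: one must confirm that $g$, holomorphic at every cusp of $\Gamma_0(pq)$, remains holomorphic at every cusp of $\Gamma_0(N)$. Since $\Gamma_1(N) \subseteq \Gamma_1(pq)$, every cusp of $\Gamma_0(N)$ lies over a cusp of $\Gamma_0(pq)$, and the order of vanishing computed relative to $\Gamma_0(N)$ is a positive multiple (the ratio of cusp widths) of that computed relative to $\Gamma_0(pq)$, so non-negativity of the cusp orders is preserved. Alternatively one may verify this directly from Ligozat's formula (Theorem \ref{1.65}) by computing the orders $v_{1/d}$ for $d \mid N$ from the exponent vector supported on the divisors of $pq$, though the subgroup argument is cleaner. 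Together the cases $\ell = 1$ and $\ell \geq 2$ establish the Proposition.
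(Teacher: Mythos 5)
Your proof is correct and takes essentially the same approach as the paper: handle the prime case by Theorem \ref{primeexist}, use Lemma \ref{reduce} to produce $pq \mid N$ with $h_{pq} = h_N$ (and $pq \notin S$ when $N \notin S$) so that Theorem \ref{semiexist} applies, and then pass to level $N$ via the containment $M_k(\Gamma_1(pq)) \subseteq M_k(\Gamma_1(N))$. The only differences are cosmetic: the paper invokes Theorem \ref{semiexist} directly when $\ell = 2$ and uses the lemma only for $\ell \geq 3$, whereas you apply the lemma for all $\ell \geq 2$ and additionally spell out the cusp-holomorphy justification for the level-raising step, which the paper simply asserts.
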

\begin{proof}
Let $N$ be as in the hypotheses and let $k\geq 2$.  We wish to show that there exists an $\eta$-quotient in $M_k(\Gamma_1(N))$ whenever $h_N \mid k$ and either $N \notin S$ or $N \in S$ and $k \geq 4$.  If $\ell = 1$ or $2$, this proposition reduces to Theorem \ref{primeexist} or \ref{semiexist} respectively, and so we need only consider the case where $\ell \geq 3$.  By Lemma \ref{reduce}, there exists a product of two distinct primes $pq \mid N$ such that $h_{pq} = h_N$, and such that if $N \notin S$ then neither is $pq$.  But then, as $h_{pq} = h_N \mid k$, it follows from Theorem \ref{semiexist} that there exists an $\eta$-quotient $f(\tau) \in M_k(\Gamma_1(pq))$.  As $pq \mid N$, $M_k(\Gamma_1(pq)) \subset M_k(\Gamma_1(N))$, and so in particular $f(\tau) \in M_k(\Gamma_1(N))$.
\end{proof}

Theorem \ref{exist} now follows easily.
\begin{proof}[Proof of Theorem \ref{exist}]
 Let $N = p_1^{e_1}p_2^{e_2}\cdots p_{\ell}^{e_\ell}$, and define $h_N$ as above.  We want to show that there exists an $\eta$-quotient $f(\tau) \in M_k(\Gamma_1(N))$ whenever $h_N \mid k$ and it is not the case that $k=2$ and $N \in S$.  Define $\widehat{N} = p_1p_2\cdots p_{\ell}$.  As membership in $S$ is determined entirely by the set of prime divisors of $N$, we have that $\widehat{N} \in S$ if and only if $N \in S$.  Additionally, $h_N = h_{\widehat{N}}$.  By Proposition \ref{sqrfrexist}, it then follows that there exists an $\eta$-quotient $f(\tau) \in M_k(\Gamma_1(\widehat{N}))$ whenever $h_N \mid k$ and it is not the case that $k=2$ and $N \in S$.  The fact that $M_k(\Gamma_1(\widehat{N})) \subset M_k(\Gamma_1(N))$ completes the proof.
\end{proof}
\section{Modular form spaces which do not contain $\eta$-quotients}
In \cite{AAHOS}, the author et al. use the bound from Theorem \ref{RWB} to show the converse to Theorem \ref{exist} in the case where $N$ is a prime or a product of two distinct primes.  We now investigate how well this converse generalizes to arbitrary squarefree levels.  Since holomorphic $\eta$-quotients are also weakly holomorphic, Theorem \ref{weakexist} shows that there can be no $\eta$-quotient in $M_k(\Gamma_1(N))$ whenever $h_N \nmid k$.  The impact that membership in $S$ has on existence of $\eta$-quotients in modular form spaces will be partially answered by Theorem \ref{sqrfrnonexist}.  In order to prove Theorem \ref{sqrfrnonexist}, we must first make some observations about the structure of $\eta$-quotients and their orders of vanishing using Theorem \ref{1.65}.
\begin{defn}
Let $N = p_1p_2\cdots p_\ell$ be coprime to 6, and let $f(\tau) \in M_k(\Gamma_1(N))$ be an $\eta$-quotient.  Note that $N$ has $2^\ell$ divisors, given by the $2^\ell$ ways we can choose to include or exclude each $p_i$.  Using Theorem \ref{1.65} and (\ref{cuspreps}), we have the following system of $2^\ell$ equations associated to the orders of vanishing of $f$ at each cusp of $\Gamma_0(N)$:
\[\left\{ 24v_{\frac{1}{d}} = \sum_{\delta \mid N} \frac{N\gcd(d,\delta)^2}{d\delta}r_\delta\right\}_{d \mid N}.\]
Let $A_N$ be the coefficient matrix for the right-hand side of the above system of equations.  That is, given some ordering $d_1,d_2,\hdots,d_{2^\ell}$ of the divisors of $N$,
\[A_N = \left( \frac{N\gcd(d_i,d_j)^2}{d_id_j}\right)_{1\leq i,j \leq 2^\ell}.\]
\end{defn}
\begin{lemma}\label{sudokulemma}
 Let $N$ be squarefree and coprime to 6, and define $A_N$ as above.  Every $c \mid N$ appears exactly once in each column and row of $A_N$.
\end{lemma}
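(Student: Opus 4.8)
The plan is to reparametrize the divisors of $N$ by subsets of its prime divisors and then read each entry of $A_N$ directly off its prime factorization. Write $N = p_1 p_2 \cdots p_\ell$ and identify a divisor $d \mid N$ with the set $D = \{\, p_i : p_i \mid d \,\}$. Since $N$ is squarefree, every valuation $v_p(d)$ is $0$ or $1$, so for a prime $p \mid N$ the $p$-adic valuation of the entry is
\[
v_p\!\left(\frac{N\gcd(d,d')^2}{dd'}\right) = 1 + 2\min\{v_p(d),v_p(d')\} - v_p(d) - v_p(d').
\]
Running through the four cases, this equals $1$ when $p$ divides both $d$ and $d'$ or divides neither, and equals $0$ when $p$ divides exactly one of them. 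Hence each entry is squarefree and equals the product of precisely those primes on which $d$ and $d'$ \emph{agree}.

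The upshot of this computation is the closed form $\frac{N\gcd(d,d')^2}{dd'} = N/(d \ast d')$, where $d \ast d' := dd'/\gcd(d,d')^2$ is the squarefree integer whose prime divisors are exactly those on which $d$ and $d'$ \emph{disagree}. Under the subset identification, $\ast$ is the symmetric difference, so the divisors of $N$ form a group isomorphic to $(\ZZ/2\ZZ)^\ell$ with identity $1$, in which every element is its own inverse. This is the structure I would exploit.

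Finally I would fix a row, indexed by a divisor $d$, and show that as $d'$ ranges over the $2^\ell$ divisors of $N$ the entries range over the divisors of $N$ without repetition. For a fixed target $c \mid N$, the entry equals $c$ exactly when $d \ast d' = N/c$; because $\ast$ makes the divisors a group, this equation has the unique solution $d' = d \ast (N/c)$. As $c$ ranges over all divisors of $N$, so does $N/c$, and translation by $d$ is a bijection of the group onto itself, so each $c$ occurs exactly once in the row. Since $A_N$ is symmetric, the same conclusion holds for columns. I expect the only real content to be the valuation computation yielding the closed form $N/(d \ast d')$; once that is in hand the counting is an immediate consequence of the group structure, so there is no serious obstacle beyond setting up the bijection cleanly.
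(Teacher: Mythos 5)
Your proof is correct, and it organizes the argument differently from the paper. The paper proves the lemma by exhibiting, for each row index $d_i$ and each target divisor $c$, the explicit witness $d_j = \gcd(c,d_i)\gcd(N/c,N/d_i)$, verifying by a prime-by-prime analysis that the corresponding entry equals $c$, and then deducing \emph{exactly} once from \emph{at least} once by a counting argument (each row has as many entries as $N$ has divisors). You instead extract the closed form $N\gcd(d,d')^2/(dd') = N/(d\ast d')$, observe that $\ast$ is symmetric difference on the prime-support sets, and invoke the group structure $(\ZZ/2\ZZ)^\ell$: the equation $d\ast d' = N/c$ has the unique solution $d' = d \ast (N/c)$, so uniqueness comes for free from the group laws rather than from pigeonhole. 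Note that your solution $d \ast (N/c)$ is in fact the same divisor as the paper's witness $\gcd(c,d_i)\gcd(N/c,N/d_i)$ (both are the product of the primes on which $c$ and the row index agree), so the two proofs share the same computational core; what your version buys is a conceptual explanation of the ``sudoku'' phenomenon, namely that $A_N$ is the Cayley table of $(\ZZ/2\ZZ)^\ell$ relabeled by $x \mapsto N/x$, and every Cayley table is a Latin square. What the paper's version buys is self-containedness: it needs nothing beyond gcd manipulations and requires no verification that $\ast$ is associative or closed on divisors (facts you use implicitly when you call the divisors a group, and which deserve at least a sentence of justification in a final write-up).
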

\begin{proof}
 As the expression $\frac{N\gcd(d_i d_j)^2}{d_id_j}$ is symmetric in $d_i$ and $d_j$, the matrix $A_N$ is symmetric.  Therefore, it suffices to show that $c$ appears exactly once in every row of $A_N$, as we will then know that $c$ appears exactly once in every column of $A_N^T = A_N$.  Additionally, as there are as many rows of $A_N$ as there are divisors or $N$, we need only show that every divisor appears in each row of $A_N$ to know that it appears exactly once.  Hence, we wish to show that, for any fixed $1 \leq i \leq 2^\ell$, there exists $d_j$ such that
\begin{equation}\label{sudoku}
 c = \frac{N\gcd(d_i,d_j)^2}{d_id_j}.
\end{equation}
Observe that $\gcd(c,d_i)\gcd(N/c,N/d_i)$ is a divisor of $N$, so there exists $1 \leq j \leq 2^{\ell}$ such that
\[d_j = \gcd(c,d_i)\gcd(N/c,N/d_i).\]
We will show that (\ref{sudoku}) is satisfied by this choice of $j$, which will finish the proof. \\

Because $N$ is squarefree, so too is any divisor, and so every divisor is a product of distinct primes.  Thus, in checking that various divisors of $N$ are equal, we need only check that they share the same prime divisors.  For example, our choice of $d_j$ can be expressed as the product of the primes dividing both $c$ and $d_i$ and of primes dividing $N$ but dividing neither $c$ nor $d_i$.  That is,
\[d_j = \!\!\!\!\!\!\! \prod_{\substack{p \text{ prime} \\ p \mid c \text{ and } p \mid d_i}} \!\!\!\!\!\!p \quad \times \!\! \prod_{\substack{p \mid N \text{ prime} \\ p \nmid c \text{ and } p \nmid d_i}} \!\!\!\!\!\!p.\]
The primes dividing both $d_i$ and $d_j$ are exactly those appearing in the first term of the above representation of $d_j$. Thus,
\[\gcd(d_i,d_j) = \!\!\!\!\!\!\!\!\prod_{\substack{p \text{ prime} \\ p \mid c \text{ and } p \mid d_i}} \!\!\!\!\!\!p = \gcd(c,d_i).\]
We can hence reduce the right-hand side of (\ref{sudoku}) by
\begin{equation}\label{sudoku2}
 \frac{N\gcd(d_i,d_j)^2}{d_id_j} = \frac{N\gcd(c,d_i)}{d_i\gcd(N/c,N/d_i)}.
\end{equation}
 In the denominator of the right-hand side of (\ref{sudoku2}), we have the term $d_i\gcd(N/c,N/d_i)$, which is the product of all primes which divide $d_i$ and all primes dividing $N$ but dividing neither $c$ nor $d_i$.  That is,
 \begin{equation}\label{sudoku3}
  d_i\gcd(N/c,N/d_i) = \!\!\!\!\!\!\prod_{\text{prime } p \mid d_i} \!\!\!\!p \hspace{2mm} \times \hspace{2mm} \!\!\!\!\!\!\!\prod_{\substack{p \mid N \text{ prime} \\ p \nmid c \text{ and } p \nmid d_i }} \!\!\!\!\!\!p.
 \end{equation}
 So, $N/(d_i\gcd(N/c,N/d_i)$ is the product of all prime divisors of $N$ which do not appear in (\ref{sudoku3}).  As we are removing every prime which divides $d_i$ and every prime which divides neither $c$ nor $d_i$, we are left only with the primes dividing $c$ but not $d_i$.  That is,
 \[\frac{N}{d_i\gcd(N/c,N/d_i)} = \!\!\!\!\!\!\!\!\prod_{\substack{p \mid N \text{ prime} \\ p \mid c \text{ and } p \nmid d_i}} \!\!\!\!\!\!p.\]
 On the other hand, $\gcd(c,d_i)$ is the product of all primes dividing both $c$ and $d_i$.  Combining these, we have
 \[\frac{N\gcd(c,d_i)}{d_i\gcd(N/c,N/d_i)} = \!\!\!\!\!\!\!\!\prod_{\substack{ p \text{ prime} \\ p \mid c \text{ and } p \nmid d_i}} \!\!\!\!\!\!p \hspace{2mm} \times \!\!\!\!\prod_{\substack{p \text{ prime} \\ p \mid c \text{ and } p \mid d_i}} \!\!\!\!\!p = \!\!\!\!\prod_{\text{prime } p \mid c} \!\!\!\!p = c,\]
 showing that (\ref{sudoku}) holds.
\end{proof}
Using this result, we are able to show that in order for $M_k(\Gamma_1(N))$ to contain $\eta$-quotients which are truly quotients and not $\eta$-products, then we must have $k$ large enough or $N$ sufficiently composite relative to its size in the sense of Theorem \ref{RWB}.  We make these ideas precise in the following Lemma.
\begin{lemma}\label{etaproduct}
Let $N$ be squarefree and coprime to 6, and let $p_1$ denote the smallest prime dividing $N$.  If
\[2k\!\!\prod_{\substack{p \mid N \\ p \text{ prime}}} \!\!\frac{p+1}{p-1} < p_1+1,\]
then for any $f(\tau) = \prod_{\delta \mid N}\eta^{r_\delta}(\delta\tau) \in M_k(\Gamma_1(N))$, we must have $r_\delta \geq 0$ for all $\delta \mid N$, so $f(\tau)$ is an $\eta$-product.
\end{lemma}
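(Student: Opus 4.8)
The plan is to argue by contradiction: assuming some exponent $r_{\delta_0}$ is negative, I will produce a cusp at which the order of vanishing of $f$ is strictly negative, contradicting the holomorphicity of $f$. Since $f \in M_k(\Gamma_1(N))$ is an $\eta$-quotient, by Remark \ref{1.64conv} it lies in $M_k(\Gamma_0(N),\chi)$ for some Nebentypus $\chi$, so all of its orders of vanishing at the cusps of $\Gamma_0(N)$ are nonnegative. Because $N$ is squarefree, every exponent $\min\{2,\operatorname{ord}_p(N)\}$ in Theorem \ref{RWB} equals $1$, so that bound specializes to
\[\sum_{\delta \mid N} |r_\delta| \leq 2k \prod_{p \mid N} \frac{p+1}{p-1} =: B.\]
The hypothesis says $B < p_1 + 1$, and since the left-hand side is a nonnegative integer, this forces the sharp bound $\sum_{\delta \mid N}|r_\delta| \leq p_1$. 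This integer bound is the only place the hypothesis enters.

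Next, suppose for contradiction that $r_{\delta_0} < 0$, hence $r_{\delta_0} \leq -1$, for some divisor $\delta_0 \mid N$. I would examine the order of vanishing at the cusp $1/\delta_0$, which by the defining system of the matrix $A_N$ is
\[24\, v_{\frac{1}{\delta_0}} = \sum_{\delta \mid N} (A_N)_{\delta_0, \delta}\, r_\delta.\]
The diagonal entry is $(A_N)_{\delta_0,\delta_0} = N\gcd(\delta_0,\delta_0)^2/\delta_0^2 = N$, the largest value any entry of $A_N$ can take. By Lemma \ref{sudokulemma}, every divisor of $N$ occurs exactly once in the row indexed by $\delta_0$, so the off-diagonal entries $(A_N)_{\delta_0,\delta}$ with $\delta \neq \delta_0$ are exactly the divisors of $N$ other than $N$ itself; in particular each is at most the largest proper divisor $N/p_1$, since $p_1$ is the smallest prime dividing $N$. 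Identifying the diagonal as the unique maximal entry and bounding the rest of the row uniformly by $N/p_1$ is the crux of the argument, and the step where Lemma \ref{sudokulemma} does the real work.

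Splitting off the diagonal term and bounding the remainder then gives
\[24\, v_{\frac{1}{\delta_0}} \leq N r_{\delta_0} + \frac{N}{p_1} \sum_{\delta \neq \delta_0} |r_\delta| \leq -N + \frac{N}{p_1}\left(\sum_{\delta \mid N}|r_\delta| - 1\right),\]
using $r_{\delta_0} \leq -1$ and $|r_{\delta_0}| \geq 1$ in the second inequality. Applying the bound $\sum_{\delta \mid N}|r_\delta| \leq p_1$ from the first step, the right-hand side is at most $-N + \frac{N}{p_1}(p_1 - 1) = -N/p_1 < 0$, so $v_{\frac{1}{\delta_0}} < 0$, contradicting the holomorphicity of $f$. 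Hence every $r_\delta \geq 0$ and $f$ is an $\eta$-product. I do not expect a serious technical obstacle beyond correctly marrying the integer bound from Theorem \ref{RWB} to the permutation structure of the rows from Lemma \ref{sudokulemma}; the one point demanding care is the passage from the real inequality $B < p_1 + 1$ to the integer inequality $\sum_{\delta \mid N} |r_\delta| \leq p_1$, which is precisely what makes the final estimate close.
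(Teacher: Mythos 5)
Your proposal is correct and follows essentially the same argument as the paper: contradiction via a negative exponent, the row of $A_N$ at the cusp $1/\delta_0$ decomposed as the diagonal entry $N$ plus proper divisors bounded by $N/p_1$ (via Lemma \ref{sudokulemma}), and the integer sharpening of the Rouse--Webb bound to $\sum_{\delta \neq \delta_0}|r_\delta| \leq p_1 - 1$, yielding $24 v_{1/\delta_0} \leq -N/p_1 < 0$. The only cosmetic difference is that you bound the full sum by $p_1$ first and then subtract the $|r_{\delta_0}| \geq 1$ term, whereas the paper removes that term before invoking the bound; the estimates are identical.
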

\begin{proof}
Towards contradiction, suppose $r_d < 0$ for some $d \mid N$.  By Theorem \ref{1.65} and Lemma \ref{sudokulemma}, the order of vanishing at the cusp of $\Gamma_0(N)$ represented by $1/d$ takes the form
\[24v_{\frac{1}{d}} = Nr_d + \sum_{\substack{\delta \mid N \\ \delta \ne d}}c_\delta r_\delta,\]
where each $c_\delta \mid N$, $c_\delta \ne N$.
As $r_d < 0$ is an integer, $r_d \leq -1$, and so
\begin{equation}\label{ineq1}
Nr_d \leq -N.
\end{equation}
As each $c_\delta \leq \frac{N}{p_1}$, we also have, for each $d \ne \delta \mid N$,
\begin{equation}\label{ineq2}
 c_\delta r_\delta \leq \frac{N}{p_1}|r_\delta|. 
\end{equation}
Together (\ref{ineq1}) and (\ref{ineq2}) yield the inequality
\begin{equation}\label{vanineq}
 24v_{\frac{1}{d}} \leq -N + \frac{N}{p_1} \sum_{\substack{\delta \mid N \\ \delta \ne d}} |r_\delta|.
\end{equation}
By Theorem \ref{RWB} and the hypothesis of the Lemma,
\[|r_d|+\sum_{\substack{\delta \mid N \\ \delta \ne d}}|r_\delta| \leq 2k \prod_{p \mid N} \left(\frac{p+1}{p-1}\right) < p_1+1.\]
In particular, as $|r_d| \geq 1$ and each $|r_\delta|$ is an integer,
\begin{equation}\label{vanineq2}
 \sum_{\substack{\delta \mid N \\ \delta \ne d}}|r_\delta| \leq p_1-1.
\end{equation}
Combining (\ref{vanineq}) and (\ref{vanineq2}) we have
\[24v_{\frac{1}{d}} \leq -N + \frac{N}{p_1}(p_1-1) = N\left(\frac{p_1-1}{p_1}-1\right) < 0.\]
But this would imply that $f(\tau)$ has a pole at the cusp represented by $\frac{1}{d}$, contradicting the assumption that $f(\tau) \in M_k(\Gamma_1(N))$.  Thus, $r_d \geq 0$ for all $d \mid N$.
\end{proof}
We now prove Theorem \ref{sqrfrnonexist}
\begin{proof}[Proof of Theorem \ref{sqrfrnonexist}]
 Let $N$ be as in the statement of the theorem.  Suppose we had an $\eta$-quotient $f(\tau) = \prod_{\delta \mid N} \eta^{r_\delta}(\delta\tau) \in M_2(\Gamma_1(N))$.  By Lemma \ref{etaproduct}, each $r_\delta \geq 0$.  As $N$ is coprime to 6, as noted in Remark \ref{1.64conv}, $f(\tau)$ must satisfy the congruence conditions in Theorem \ref{1.64}.  That is,
\begin{equation}\label{1.64cong}
 \sum_{\substack{\delta \mid N \\ \delta \equiv 1 \!\!\!\!\pmod{24}}} \!\!\!\!\!\!\!\!r_\delta \hspace{4mm} + \!\!\!\!\! \sum_{\substack{\delta \mid N \\ \delta \equiv 5 \!\!\!\!\pmod{24}}} \!\!\!\!\!\!\!\!5r_\delta \equiv 0 \pmod{24}.
\end{equation}
Additionally, as the weight $k$ is equal to $1/2$ times the sum of the $r_\delta$,
\begin{equation}\label{weightsum}
\sum_{\delta \mid N} r_\delta = 4.
\end{equation}
However, the largest the left-hand side of (\ref{1.64cong}) can be for non-negative $r_\delta$ satisfying (\ref{weightsum}) is 20, and so these have no simultaneous solutions in non-negative integers.  Hence, there is no $\eta$-quotient in $M_2(\Gamma_1(N))$.
\end{proof}

\newpage

\printbibliography

\end{document}